\documentclass[10pt,reqno]{amsart}
\usepackage{floatflt}
\usepackage{xypic}
\usepackage{rotating}
\usepackage{bbm}
\usepackage{mathrsfs}
\usepackage{diagbox}
\usepackage{dutchcal}
\usepackage{extarrows}
\usepackage{cite}
\usepackage{amsfonts} 
\usepackage[dvipsnames,usenames]{color}
\usepackage{graphicx,latexsym,bm,amsmath,amssymb,verbatim,multicol,lscape}
\makeatletter
\@namedef{subjclassname@2020}{
\textup{2020} Mathematics Subject Classification}
\makeatother
\newtheorem{theorem}{Theorem} [section]

\newtheorem{conjecture}[theorem]{Conjecture}
\newtheorem{lemma}[theorem]{Lemma}

\newtheorem{definition}[theorem]{Definition}

\numberwithin{equation}{section}

\def\bew{\begin{widetext}}
\def\eew{\end{widetext}}
\def\be{\begin{equation}}
\def\ee{\end{equation}}
\def\bea{\begin{eqnarray}}
\def\eea{\end{eqnarray}}

\allowdisplaybreaks
\begin{document}
\title[Proof of the Gawron-Miska-Ulas conjecture concerning unboundedness]
{Proof of the Gawron-Miska-Ulas conjecture concerning unboundedness of coefficients
of power series expansion of $\prod_{n=0}^{\infty}(1-x^{2^{n}})^m$}

\begin{abstract}
It is well known that $F(x)=\prod_{n=0}^{\infty}(1-x^{2^n})$ is the generating
function of the Prouhet-Thue-Morse sequence $\{(-1)^{\sigma_2(n)}\}_{n=0}^\infty$,
where $\sigma_2(n)$ is the sum of (binary) digits of $n$. Let $m$ be an integer.
In 2018, Gawron, Miska and Ulas initiated the study of arithmetic properties of
power series expansion of the function
$$F_m(x)=F(x)^m=\sum_{n=0}^{\infty}t_m(n) x^n,$$
and proposed a conjecture stating that for any given integer $m\ge 2$,
the sequence $\{t_m(n)\}_{n=0}^{\infty}$ is unbounded. In this paper,
we introduce a new method to investigate this conjecture. In fact,
by making use of algebraic, $p$-adic and analytic methods, we show
that the Gawron-Miska-Ulas conjecture is true.
\end{abstract}

\author[J.M. Yu]{Jinmin Yu}
\address{Mathematical College, Sichuan University, Chengdu 610064, P.R. China}
\email{jmyumath@163.com}
\author[W.Z. Lei]{Wenzhong Lei}
\address{Mathematical College, Sichuan University, Chengdu 610064, P.R. China}
\email{lwzh1729@163.com}
\author[S.F. Hong]{Shaofang Hong$^*$}
\address{Mathematical College, Sichuan University, Chengdu 610064, P.R. China}
\email{sfhong@scu.edu.cn}

\thanks{$^*$S.F. Hong is the corresponding author and was supported
partially by National Science Foundation of China \#12571007.}
\keywords{Unboundedness; Jordan canonical form; 2-adic valuation;
eigenvalue; shift operator; limit point; periodic sequence}
\subjclass[2020]{Primary 11B83; Secondary 11C20,11P81}

\maketitle

\section{Introduction}

Let $\mathbb{N}$ and $\mathbb{Z}$ denote the set of all the nonnegative integers
and the ring of integers, respectively. For any nonnegative integer $n$, we denote
by $\sigma_2(n)$ the sum of (binary) digits of $n$. Namely,
$\sigma_2(n):=\sum_{k=0}^m\epsilon_k$ if $n=\sum_{k=0}^m\epsilon_k 2^k$ with
$\epsilon_k\in\{0,1\}$, is the unique expansion of $n$ in base 2.
Furthermore, we let $\upsilon_2(n)$ stand for the 2-adic additive valuation
of $n$, i.e., $v_2(n)$ is the biggest nonnegative integer $r$ with $2^r$
dividing $n$. The Prouhet-Thue-Morse sequence (the PTM sequence for short)
on the alphabet $\{-1, +1\}$ is defined as ${\bf a}=\{a_n\}_{n=0}^\infty$,
where $a_n:=(-1)^{\sigma_2(n)}$. The sequence ${\bf a}$ satisfies the
following recurrence relations:
$$a_0=1, \ a_{2n}=a_n \ {\rm and} \ a_{2n+1}=-a_n$$
for any integer $n\ge 0$. The PTM sequence has many striking properties and lots of
applications were found in number theory, combinatorics on words, analysis on manifolds
and even physics \cite{[AS]}. The sequence ${\bf a}$ holds the simple shape
of the generating function $F(x):=\sum_{n=0}^\infty a_n x^n\in\mathbb{Z}[[x]]$.
In fact, the recurrence relations give us the functional equation
$$F(x)=(1-x)F(x^2)$$
from which one can easily derive the following identity
$$
F(x)=\prod_{n=0}^\infty(1-x^{2^n}).
$$

For the reciprocal function $\frac{1}{F(x)}$, let ${\bf b}=(b_n)_{n=0}^\infty$
be the sequence of coefficients of its power series expansion
$$
\frac{1}{F(x)}=\prod_{n=0}^\infty\frac{1}{1-x^{2^n}}=\sum_{n=0}^\infty b_n x^n\in\mathbb{Z}[[x]].
$$
The sequence ${\bf b}$ has strong combinatorial properties. In fact, it satisfies that $b_0=b_1=1$ and
$$
b_{2n}=b_{2n-1}+b_n, b_{2n+1}=b_{2n}
$$
for any integer $n\ge 1$. One call ${\bf b}$ the sequence of the binary partition
function. It was introduced by Euler and was studied by Churchhouse \cite{[Churchhouse]}
(we can also consult the papers \cite{[de Bruijn],[Knuth],[Mahler]}). It is remarked
that Gupta \cite{[Gupta]} confirmed the Churchhouse conjecture \cite{[Churchhouse]}
by proving that for any integer $k\ge 1$ and odd $n$, one has
$$b_{2^{2k+2}n}-b_{2^{2k}n}\equiv 0\pmod {2^{3k+2}}$$
and
$$b_{2^{2k+1}n}-b_{2^{2k-1}n}\equiv 0\pmod {2^{3k}}.$$
In 2019, the sequence ${\bf b}$ was generalized by Ulas and \'{Z}mija \cite{[UZ]}
to a more general sequence.

For any integer $m$, let $t_m(n)$ be the coefficient of $x^n$ of the power series
$$F_m(x)=F(x)^m:=\prod_{i=0}^{\infty}(1-x^{2^i})^m.$$
That is,
$$F_m(x)=\prod_{i=0}^{\infty}(1-x^{2^i})^m=\sum_{n=0}^{\infty}t_m(n)x^n.$$
One can easily see that both ${\bf a}$ and ${\bf b}$ are the sequences of coefficients
of the power series expansion of $F_m(x)=F(x)^m$ for $m=1$ and $m=-1$, respectively.
Evidently, one has the functional equation
$$F_m(x)=(1-x)^mF_m(x^2).$$
Moreover, for any given positive integer $m$, the sequence $\{t_m(n)\}_{n=0}^\infty$
is the Cauchy convolution of $m$ copies of the Prouhet-Thue-Morse sequence, that is
$$
t_m(n)=\sum_{(i_1,...,i_m)\in\mathbb{N}^k \atop i_1+i_2+...+i_m=n}
(-1)^{\sum_{k=1}^m\sigma_2(i_k)}.
$$
For negative integer $m$, the $n$-th term of the sequence $\{t_m(n)\}_{n=0}^\infty$
counts the number of representations of the number $n$ as a sum
of powers of 2 where each summand can have one among $-m$ colors.

In 2018, Gawron, Miska and Ulas \cite{[MG]} initiated the investigation of the arithmetic
properties of coefficients of the power series expansions of the function $F_m(x)$.
In \cite{[MG]}, Gawron, Miska and Ulas established several interesting and important results
about the properties of the power series expansions of this function. For example, they
obtained a characterization on the 2-adic valuation of the sequence $\{t_m(n)\}_{n=0}^\infty$
for $m$ being a power of 2 and $m=3$. Regarding the sequence $\{t_2(n)\}_{n=0}^\infty$, it was
proved in \cite{[MG]} that the set of its values is just $\mathbb{Z}\setminus\{0\}$ and
that it is log-concave:
$$t_2(n)^2>t_2(n-1) t_2(n+1)$$
for any positive integer $n$. It was observed in \cite{[MG]} that the crude estimation
using the fact that $|t_1(n)|=1$ gives only the equality $t_m(n)=O(n^m)$
and if $m\ge 2$, then $t_m(n)=O(n^{\frac{m}{2}})$ for any integer $n\ge 1$.
The latter one shows that there is a lot of cancellation in the sum defining
$t_m(n)$ and so one naturally asks whether the sequence
$\{t_m(n)\}_{n=0}^\infty$ is bounded or not. Gawron, Miska and Ulas \cite{[MG]}
believed that the sequence $\{t_m(n)\}_{n=0}^{\infty}$ is unbounded for any integer
$m\ge 2$. That is, they proposed the following conjecture.

\begin{conjecture} (Conjecture 3.9 of \cite{[MG]})
For each integer $m\ge 2$, we have
$$\limsup_{n\to\infty} t_m(n)=+\infty$$
and
$$
\liminf_{n\to\infty} t_m(n)=-\infty.
$$
\end{conjecture}
\noindent It was confirmed in \cite{[MG]} the truth of Conjecture 1.1 for the
case $m=3$ and $m=2^k$. But the general form of Conjecture 1.1 is still open.

In this paper, our main goal is to study the unboundedness of the sequence
$\{t_m(n)\}_{n=0}^{\infty}$ when $m\ge 2$. We introduce a new method to
investigate the problem of unboundedness of the sequence $\{t_m(n)\}_{n=0}^{\infty}$
when $m\ge 2$. Actually, by using algebraic, $p$-adic and analytic methods,
we solve this question by showing the following main result of this paper.   
\begin{theorem}
Let $m\ge 2$ be an integer. Then the sequence $\{t_m(n)\}_{n=0}^{\infty}$
is unbounded.
\end{theorem}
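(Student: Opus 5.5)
We argue by contradiction and use the functional equation $F_m(x)=(1-x)^mF_m(x^2)$ to reduce everything to a finite-dimensional linear recursion. Comparing coefficients gives
$$t_m(n)=\sum_{j\ge 0}(-1)^j\binom{m}{j}\,t_m\!\left(\tfrac{n-j}{2}\right),$$
where the sum is over $j$ with $j\equiv n\pmod 2$, and $t_m(\cdot)$ vanishes at non-integers and negative arguments. Fix a window length $L\ge m$ and consider the vectors $V_k(n)=(t_m(n),t_m(n+1),\dots,t_m(n+L))$. The two maps $n\mapsto 2n$ and $n\mapsto 2n+1$ give
$$V(2n+\varepsilon)=A_\varepsilon V(n),\qquad \varepsilon\in\{0,1\},$$
with explicit integer matrices $A_0,A_1$ of size $(L+1)\times(L+1)$, whose entries are signed binomial coefficients $\pm\binom{m}{j}$. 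This is a 2-regular sequence. Unboundedness then amounts to showing that some product $A_{\varepsilon_1}\cdots A_{\varepsilon_k}$, applied to an initial vector $V(n_0)$, grows.

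The simplest choice is the path $n\to 2n$ repeated. Along $n=2^k n_0$, or more flexibly along $n=2^k n_0+c$ with $c$ chosen so that the window stays of a fixed shape, we get $V(2^k n_0)=A_0^kV(n_0)$. If $t_m$ were bounded, then $A_0^k v$ would stay bounded for every vector $v$ of the form $V(n_0)$. Put $A_0$ in Jordan canonical form. Boundedness forces the component of $v$ along any generalized eigenvector with eigenvalue of modulus $>1$, or of modulus $1$ inside a nontrivial Jordan block, to vanish. So I would compute the characteristic polynomial of $A_0$, or of a suitable product such as $A_0A_1$. Its structure comes from $(1-x)^m$: I expect eigenvalues like $2^{j}$ or $\pm 1$ arising from the factor $\binom{m}{j}$. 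I would then exhibit one eigenvalue $\lambda$ with $|\lambda|>1$ whose left eigenvector $w$ satisfies $w\cdot V(n_0)\neq 0$ for some $n_0$.

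To show $w\cdot V(n_0)\ne0$, I would use $2$-adic valuations. Write $m=2^a b$ with $b$ odd. The Kummer/Lucas structure of $\binom{m}{j}$ modulo powers of $2$ should make $w$ rational with controlled denominators, and $w\cdot V(n_0)$ computable modulo $2$ for small $n_0$. For example, $t_m(0)=1$, and $t_m(1)=-m$, $t_m(2)=\binom{m}{2}-m$, and so on. Showing that this quantity has a finite $2$-adic valuation proves it is nonzero. This yields $|t_m(2^kn_0+c)|\gg|\lambda|^k$, a contradiction. The statement is only unboundedness, but the same growing component would carry signs, via $\lambda$ or via alternating between paths, which would give the limsup/liminf version.

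The main obstacle is the spectral step. We need to identify, for general $m$ rather than only $m=3$ or $m=2^k$, an eigenvalue of modulus $>1$, or a nontrivial Jordan block at modulus $1$, in some word of the matrices $A_\varepsilon$. We also need to certify that the initial data is not orthogonal to the corresponding left eigenvector. The crude bound $t_m(n)=O(n^{m/2})$ suggests a joint spectral radius near $2^{m/2}$, so I would first try the word $A_0A_1$ (corresponding to $n\mapsto 4n+1$). I would compute its trace or determinant to show that the product of eigenvalues exceeds $1$ in modulus; $\det A_\varepsilon$ is likely a power of $\binom{m}{j}$-type terms, which would force some eigenvalue off the unit disc. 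If that fails, the fallback is an analytic argument: assume boundedness, so $F_m$ has bounded coefficients and $|F_m(x)|=O(1/(1-|x|))$. Then derive a contradiction from the growth of $F_m$ near a point such as $x\to e^{2\pi i/3}$, using the product formula, since $|1-x^{2^n}|$ equals $\sqrt3$ for all $n\ge1$ there, which gives growth like $3^{mN/2}$ at radius $1-2^{-N}$.
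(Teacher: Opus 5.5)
Your main spectral argument follows the paper's strategy: a doubling matrix on a window of values of $t_m$, Jordan form, and a left eigenvector $w$ for an eigenvalue of modulus $>1$. As written, though, it has a setup error and a genuine gap at exactly the step you flag.

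The setup error is that the forward window $(t_m(n),\dots,t_m(n+L))$ does not close under $n\mapsto 2n$. The recursion $t_m(2n)=\sum_j\binom{m}{2j}t_m(n-j)$ needs $t_m(n-1),\dots,t_m(n-\lfloor m/2\rfloor)$. You need a backward window; the paper uses $V(n)=(t_m(n-1),\dots,t_m(n-m+1))^T$ with $V(2n)=A_mV(n)$. A window much longer than this produces zero columns, so a determinant test on it returns $0$.

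The gap is that you give no argument that some eigenvalue has modulus $>1$, and your guess about the spectrum is off. For $m=3$ the matrix is $\begin{pmatrix}-3&-1\\ 1&3\end{pmatrix}$, with eigenvalues $\pm2\sqrt2$. The missing idea is the content of the paper's Lemmas 2.2--2.4:
\begin{itemize}
\item Up to signs and the order of rows and columns, $A_m$ is the Sylvester matrix of $f_m(x)=\sum_{i\ \mathrm{odd}}\binom{m}{i}x^{(i-1)/2}$ and $g_m(x)=\sum_{i\ \mathrm{even}}\binom{m}{i}x^{i/2}$.
\item The identity $g_m(x^2)-xf_m(x^2)=(1-x)^m$ excludes a common root, since a common root $\beta^2$ would force $(1-\beta)^m=(1+\beta)^m=0$. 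Hence $\det A_m\neq 0$.
\item For $m$ even, the rows with entries $-\binom{m}{2k+1}$ are entirely even. For $m$ odd, subtracting each odd-indexed row from the row below it gives rows with entries $\binom{m}{2j}+\binom{m}{2j+1}=\binom{m+1}{2j+1}$, again all even. So $2^{\lfloor m/2\rfloor}\mid\det A_m$.
\end{itemize}
Therefore $|\det A_m|\ge 2$, and some eigenvalue satisfies $|\lambda_1|>1$.

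Your $2$-adic plan for showing $w\cdot V(n_0)\neq 0$ is also not viable as stated, because $w$ is typically irrational: for $m=3$ and $\lambda=2\sqrt2$ one gets $w=(1,3+2\sqrt2)$. The step is easy, however. The vectors $V(1),\dots,V(m-1)$ form a unitriangular matrix, since $t_m(0)=1$ and $t_m$ vanishes at negative arguments. So no nonzero $w$ is orthogonal to all of them, and $|w\cdot V(2^dn_0)|=|\lambda_1|^d|w\cdot V(n_0)|\to\infty$. The paper reaches the same conclusion by a much longer detour through limit points, eventual periodicity, and a contradiction as $x\to 1^-$.

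Your fallback, by contrast, is a correct and much shorter proof once two points are made explicit. Put $\omega=e^{2\pi i/3}$, $x=r\omega$ and $\rho_k=r^{2^k}$.
\begin{itemize}
\item Since $\omega^{2^k}\in\{\omega,\bar\omega\}$ has real part $-\tfrac12$, every factor satisfies $|1-x^{2^k}|^2=1+\rho_k+\rho_k^2\ge 1$. So the factors with $k$ near or beyond $N$ cannot cancel the growth.
\item For $r=1-2^{-N}$ and $k\le N-3$, Bernoulli's inequality gives $\rho_k\ge 7/8$. Hence $|F_m(r\omega)|\ge(169/64)^{m(N-2)/2}$.
\end{itemize}
Boundedness gives $|F_m(r\omega)|\le C/(1-r)=C2^N$. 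Since $m\ge 2$ and $169/64>2$, this is a contradiction for large $N$. (For $m=1$ the rate $\sqrt3<2$ is consistent with $|t_1(n)|=1$.)

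This route bypasses the resultant, the $2$-adic divisibility, the Jordan form and the periodicity machinery entirely. It also gives the two-sided conjecture with one more line. If $t_m(n)\ge -C$ for all $n$, then $F_m(x)+C/(1-x)$ has nonnegative coefficients, so $|F_m(r\omega)|\le F_m(r)+C/(1-r)+C/|1-r\omega|\le 1+C+C2^N$, the same contradiction; the case $t_m(n)\le C$ is symmetric. What the spectral route buys instead, once completed as above, is explicit growth $\max_{1\le i\le m-1}|t_m(2^dn_0-i)|\gg|\lambda_1|^d$ along a specific sparse set of indices. You should make the analytic argument your main proof.
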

\noindent By Theorem 1.2, we know immediately that the Gawron-Miska-Ulas
conjecture (Conjecture 1.1) is true.

This paper is organized as follows. In Section 2, we prove some preliminary
lemmas which are needed in the proof of Theorem 1.2. In Section 3, we provide
the proof of Theorem 1.2 as the conclusion of the paper.

\section{Auxiliary lemmas}

In this section, we present several auxiliary lemmas
that are needed in the proof of Theorem 1.1.

For any positive integer $m$, we define four matrices $A_m^{(1)},
A_m^{(2)}, B_m^{(1)}$ and $B_m^{(2)}$ of order $m-1$ as follows:
If $2\nmid m$, then we define
$$
\begin{small}
A_m^{(1)}:=\left(\begin{array}{ccccccccccc}
-\binom{m}{1}&-\binom{m}{3}& -\binom{m}{5} &...&-\binom{m}{m-2}
&-\binom{m}{m}&0& 0&...&0&0\\
\binom{m}{0}&\binom{m}{2}& \binom{m}{4} &...&\binom{m}{m-3}
&\binom{m}{m-1}& 0& 0&...&0&0\\
0 &-\binom{m}{1}&-\binom{m}{3}&...&-\binom{m}{m-4}
&-\binom{m}{m-2}&-\binom{m}{m}&0&...&0 & 0\\
0 &\binom{m}{0}&\binom{m}{2}&...& \binom{m}{m-5}
&\binom{m}{m-3}&\binom{m}{m-1}& 0&...&0 &0 \\
\cdot&\cdot&\cdot&...&\cdot&\cdot&\cdot&\cdot&...&\cdot&\cdot\\
\cdot&\cdot&\cdot&...&\cdot&\cdot&\cdot&\cdot&...&\cdot&\cdot\\
\cdot&\cdot&\cdot&...&\cdot&\cdot&\cdot&\cdot&...&\cdot&\cdot\\
0&0&0&...&-\binom{m}{1} &-\binom{m}{3}&-\binom{m}{5}& -\binom{m}{7}
&...& -\binom{m}{m-2} & -\binom{m}{m}\\
0&0&0&...&\binom{m}{0} &\binom{m}{2}&\binom{m}{4}
&\binom{m}{6}&...& \binom{m}{m-3} &\binom{m}{m-1}\\
\end{array}
\right)
\end{small}
$$
and
$$
B_m^{(1)}:=\left(\begin{array}{ccccccccccc}
\binom{m}{1}&\binom{m}{3}& \binom{m}{5}&...&
\binom{m}{m-4} &\binom{m}{m-2}&\binom{m}{m}&0&...&0&0\\
0 &\binom{m}{1}&\binom{m}{3}&...& \binom{m}{m-6}
&\binom{m}{m-4} &\binom{m}{m-2}&\binom{m}{m}&...&0&0\\
\cdot&\cdot&\cdot&...&\cdot&\cdot&\cdot&\cdot&...&\cdot&\cdot\\
\cdot&\cdot&\cdot&...&\cdot&\cdot&\cdot&\cdot&...&\cdot&\cdot\\
\cdot&\cdot&\cdot&...&\cdot&\cdot&\cdot&\cdot&...&\cdot&\cdot\\
0&0&0 &...&\binom{m}{1}&\binom{m}{3}&\binom{m}{5}&\binom{m}{7}&...
&\binom{m}{m}&0\\
0&0&0 &...&0&\binom{m}{1}&\binom{m}{3}&\binom{m}{5}&...
&\binom{m}{m-2}&\binom{m}{m}\\
\binom{m}{0}&\binom{m}{2}& \binom{m}{4}&...&
\binom{m}{m-5} &\binom{m}{m-3}&\binom{m}{m-1}&0&...&0&0\\
0 &\binom{m}{0}&\binom{m}{2}&...& \binom{m}{m-7}
&\binom{m}{m-5} &\binom{m}{m-3}&\binom{m}{m-1}&...&0&0\\
\cdot&\cdot&\cdot&...&\cdot&\cdot&\cdot&\cdot&...&\cdot&\cdot\\
\cdot&\cdot&\cdot&...&\cdot&\cdot&\cdot&\cdot&...&\cdot&\cdot\\
\cdot&\cdot&\cdot&...&\cdot&\cdot&\cdot&\cdot&...&\cdot&\cdot\\
0&0&0 &...&\binom{m}{0}&\binom{m}{2}&\binom{m}{4}&\binom{m}{6}&...
&\binom{m}{m-1}&0\\
0&0&0 &...&0&\binom{m}{0}&\binom{m}{2}&\binom{m}{4}&...
&\binom{m}{m-3}&\binom{m}{m-1}\\
\end{array}
\right).
$$
And if $2|m$, then we define
$$
\begin{small}
A_m^{(2)}:=\left(\begin{array}{ccccccccccc}
-\binom{m}{1}&-\binom{m}{3}& -\binom{m}{5} &...&-\binom{m}{m-3}
&-\binom{m}{m-1}&0& 0&...&0&0\\
\binom{m}{0}&\binom{m}{2}& \binom{m}{4} &...&\binom{m}{m-4}
&\binom{m}{m-2}&\binom{m}{m}& 0&...&0&0\\
 0 &-\binom{m}{1}&-\binom{m}{3}&...&-\binom{m}{m-5} &-\binom{m}{m-3}&-\binom{m}{m-1}&0&...&0 & 0\\
 0 &\binom{m}{0}&\binom{m}{2}&...& \binom{m}{m-6}&\binom{m}{m-4}&\binom{m}{m-2}&\binom{m}{m}&...&0 &0 \\
\cdot&\cdot&\cdot&...&\cdot&\cdot&\cdot&\cdot&...&\cdot&\cdot\\
\cdot&\cdot&\cdot&...&\cdot&\cdot&\cdot&\cdot&...&\cdot&\cdot\\
\cdot&\cdot&\cdot&...&\cdot&\cdot&\cdot&\cdot&...&\cdot&\cdot\\
0&0&0&...&-\binom{m}{1} &-\binom{m}{3}&-\binom{m}{5}& -\binom{m}{7}
&...& -\binom{m}{m-1} &0\\
0&0&0&...&\binom{m}{0} &\binom{m}{2}&\binom{m}{4}& \binom{m}{6}
&...& \binom{m}{m-2} &\binom{m}{m}\\
0&0&0&...&0&-\binom{m}{1}&-\binom{m}{3}& -\binom{m}{5}
&...& -\binom{m}{m-3} &-\binom{m}{m-1}\\
\end{array}
\right)
\end{small}
$$
and
$$
B_m^{(2)}:=\left(\begin{array}{cccccccccccc}
\binom{m}{1}&\binom{m}{3}& \binom{m}{5}&...&
\binom{m}{m-5} &\binom{m}{m-3}&\binom{m}{m-1}&0&0&...&0&0\\
0 &\binom{m}{1}&\binom{m}{3}&...& \binom{m}{m-7}
&\binom{m}{m-5} &\binom{m}{m-3}&\binom{m}{m-1}&0&...&0&0\\
\cdot&\cdot&\cdot&...&\cdot&\cdot&\cdot&\cdot&\cdot&...&\cdot&\cdot\\
\cdot&\cdot&\cdot&...&\cdot&\cdot&\cdot&\cdot&\cdot&...&\cdot&\cdot\\
\cdot&\cdot&\cdot&...&\cdot&\cdot&\cdot&\cdot&\cdot&...&\cdot&\cdot\\
0&0&0 &...&0&\binom{m}{1}&\binom{m}{3}&\binom{m}{5}&\binom{m}{7}&...
&\binom{m}{m-1}&0\\
0&0&0 &...&0&0&\binom{m}{1}&\binom{m}{3}&\binom{m}{5}&...
&\binom{m}{m-3}&\binom{m}{m-1}\\
\binom{m}{0}&\binom{m}{2}& \binom{m}{4}&...&\binom{m}{m-6}
&\binom{m}{m-4} &\binom{m}{m-2}&\binom{m}{m}& 0&...&0&0\\
0 &\binom{m}{0}&\binom{m}{2}&...& \binom{m}{m-8}
&\binom{m}{m-6} &\binom{m}{m-4}&\binom{m}{m-2}&\binom{m}{m}&...&0&0\\
\cdot&\cdot&\cdot&...&\cdot&\cdot&\cdot&\cdot&\cdot&...&\cdot&\cdot\\
\cdot&\cdot&\cdot&...&\cdot&\cdot&\cdot&\cdot&\cdot&...&\cdot&\cdot\\
\cdot&\cdot&\cdot&...&\cdot&\cdot&\cdot&\cdot&\cdot&...&\cdot&\cdot\\
0&0&0 &...&\binom{m}{0}&\binom{m}{2}&\binom{m}{4}&\binom{m}{6}&
\binom{m}{8}&...&\binom{m}{m}&0\\
0&0&0 &...&0&\binom{m}{0}&\binom{m}{2}&\binom{m}{4}
&\binom{m}{6}&...&\binom{m}{m-2}&\binom{m}{m}\\
\end{array}
\right).
$$

For any integer $m\ge 2$, we define the matrices
$A_m$ and $B_m$ of order $m-1$ as follows:
\begin{equation}\label{eq2.1}
A_m:=
\begin{cases}
A_m^{(1)}, \text{if}\; 2\nmid m,\\
A_m^{(2)}, \text{if}\; 2|m\\
\end{cases}
\end{equation}
and
\begin{equation}
B_m:=\begin{cases}
B_m^{(1)}, \text{if}\; 2\nmid m,\\
B_m^{(2)}, \text{if}\; 2|m,\\
\end{cases}
\end{equation}
and define two polynomials $f_m(x)$ and $g_m(x)$ by
$$f_m(x):=\sum_{i=0 \atop 2\nmid i}^m\binom{m}{i}x^{\frac{i-1}{2}}$$
and
$$g_m(x):=\sum_{i=0 \atop 2|i}^m\binom{m}{i}x^{\frac{i}{2}}.$$
Then $\det B_m$ is the {\it eliminant} of $f_m(x)$ and $g_m(x)$.
Now we give the following two results.

\begin{lemma} \cite{[SL]} Let $m\ge 2$ be an integer. Then it holds that
$\det B_m\not=0$ if and only if $\gcd(f_m(x),g_m(x))=1$.
\end{lemma}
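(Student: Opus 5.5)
The plan is to show that $B_m$ is, up to a permutation of its rows, the Sylvester matrix of $f_m$ and $g_m$. The lemma then follows from the classical fact that two polynomials over a field have a common root (in an algebraic closure), equivalently a nontrivial gcd, if and only if their resultant vanishes.

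\textbf{Degrees.} For $m$ odd, $f_m$ has degree $(m-1)/2$ with leading coefficient $\binom{m}{m}=1$. The polynomial $g_m$ has degree $(m-1)/2$ with leading coefficient $\binom{m}{m-1}=m\neq 0$. For $m$ even, $\deg f_m=(m-2)/2$ with leading coefficient $\binom{m}{m-1}=m$, and $\deg g_m=m/2$ with leading coefficient $1$. In both cases $\deg f_m+\deg g_m=m-1$, which is the order of $B_m$. Both leading coefficients are nonzero, so no degenerate case of the resultant occurs.

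\textbf{Row structure.} The first block of rows of $B_m$ consists of successive right shifts of the coefficient vector $(\binom{m}{1},\binom{m}{3},\dots)$ of $f_m$, listed in increasing degree. There are $\deg g_m$ such rows. The second block consists of shifts of $(\binom{m}{0},\binom{m}{2},\dots)$, the coefficients of $g_m$, and there are $\deg f_m$ of them. I will check this row count against the displayed matrices in both parity cases by reading off the number of leading zeros in the last row of each block. This is exactly the Sylvester matrix $\mathrm{Syl}(f_m,g_m)$, with coefficients written in ascending rather than descending order.

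\textbf{Relating the determinants.} Reversing the column order and reordering the rows changes the determinant only by a sign, so $\det B_m=\pm\operatorname{Res}(f_m,g_m)$.

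\textbf{Concluding.} Over $\mathbb{Q}$ (or $\mathbb{C}$), $\operatorname{Res}(f_m,g_m)\neq 0$ if and only if $f_m$ and $g_m$ have no common root, which holds if and only if $\gcd(f_m,g_m)=1$. For completeness, I would recall the linear-algebra proof of this fact. The resultant vanishes if and only if there exist $u,v$, not both zero, with $\deg u<\deg g_m$, $\deg v<\deg f_m$ and $uf_m+vg_m=0$. Such a pair exists if and only if $f_m$ and $g_m$ share a nonconstant factor, by unique factorization in $\mathbb{Q}[x]$ together with the degree bounds.

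The main obstacle is purely bookkeeping: verifying from the displayed patterns that each block has the correct number of rows and that the shifts line up, especially in the even case, where the two blocks have different lengths. Everything else is the standard resultant theory.
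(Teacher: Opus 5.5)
Your proposal is correct and is essentially the paper's own argument: the paper simply remarks that $\det B_m$ is the eliminant (Sylvester resultant) of $f_m$ and $g_m$ and appeals to the standard resultant criterion from the cited reference, and your checks (both leading coefficients nonzero, $\deg g_m$ shifted rows of $f_m$-coefficients and $\deg f_m$ shifted rows of $g_m$-coefficients) supply the bookkeeping the paper leaves implicit. Your linear-dependence argument also applies directly to the rows of $B_m$, since these are the ascending coefficient vectors of $x^k f_m$ and $x^k g_m$, so the sign comparison with the standard Sylvester matrix is not actually needed.
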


\begin{lemma}
Let $m\ge 2$ be an integer. Then $|\det A_m|=|\det B_m|\not=0$.
\end{lemma}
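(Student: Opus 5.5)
The plan has two independent parts: first show $|\det A_m|=|\det B_m|$ by a purely combinatorial comparison of rows, then show $\det B_m\neq 0$ by verifying the coprimality criterion of Lemma 2.1 through an explicit formula for $f_m$ and $g_m$.

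\emph{Step 1: $A_m$ is $B_m$ up to row operations.} The rows of $A_m$ come in two kinds. The first kind are rows whose nonzero entries are $-\binom{m}{1},-\binom{m}{3},\dots$, i.e.\ $-1$ times the coefficient vector of $f_m$. The second kind are rows whose nonzero entries are $\binom{m}{0},\binom{m}{2},\dots$, i.e.\ the coefficient vector of $g_m$. Within each kind, consecutive rows are shifted one column to the right. This is exactly the structure of the rows of $B_m$, where all the $f_m$-rows are listed first (shifted successively) and then all the $g_m$-rows. So I would check the following points.
\begin{itemize}
\item For $m$ odd, $\deg f_m=\deg g_m=\frac{m-1}{2}$, and both $A_m^{(1)}$ and $B_m^{(1)}$ contain $\frac{m-1}{2}$ rows of each kind, with the same starting columns $1,2,\dots,\frac{m-1}{2}$.
\item For $m$ even, $\deg f_m=\frac{m-2}{2}$ and $\deg g_m=\frac m2$. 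Both matrices contain $\frac m2$ $f$-rows and $\frac{m-2}{2}$ $g$-rows, again with matching starting columns. In $A_m^{(2)}$ the extra $f$-row is the last row.
\end{itemize}
Once the shifts agree, $A_m$ is obtained from $B_m$ by a row permutation (interleaving the two blocks) together with multiplication of each $f$-row by $-1$. Hence $\det A_m=\pm\det B_m$. The only care needed here is bookkeeping of the column offsets, which I expect to be routine but the most error-prone part.

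\emph{Step 2: $\gcd(f_m,g_m)=1$.} The key identities are
$$x f_m(x^2)=\tfrac12\big((1+x)^m-(1-x)^m\big),\qquad g_m(x^2)=\tfrac12\big((1+x)^m+(1-x)^m\big),$$
which follow directly from the binomial theorem. Suppose $\alpha\in\mathbb{C}$ were a common root of $f_m$ and $g_m$, and pick $\beta$ with $\beta^2=\alpha$. Then both $\beta f_m(\beta^2)=0$ and $g_m(\beta^2)=0$. Adding and subtracting the two identities gives $(1+\beta)^m=0$ and $(1-\beta)^m=0$. These force $\beta=-1$ and $\beta=1$ simultaneously, which is a contradiction. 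Hence $f_m$ and $g_m$ have no common root, so $\gcd(f_m(x),g_m(x))=1$. By Lemma 2.1, $\det B_m\neq0$, and together with Step 1 this gives $|\det A_m|=|\det B_m|\neq0$.
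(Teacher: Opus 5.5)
Your proposal is correct and follows the paper's route: it derives coprimality of $f_m$ and $g_m$ from the binomial identity $g_m(x^2)\pm x f_m(x^2)=(1\pm x)^m$ and then invokes Lemma 2.1, with $|\det A_m|=|\det B_m|$ coming from $A_m$ being a signed row permutation of $B_m$. The differences are only in presentation. You use both signs of the identity with a common-root argument, where the paper uses only $(1-x)^m$ plus a parity argument on $d(x^2)=(x-1)^{2e}$. You also write out the row and column bookkeeping that the paper simply calls obvious.
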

\begin{proof}
First of all, we have
\begin{equation}\label{eq2.3}
g_m(x^2)-xf_m(x^2)=(1-x)^m.
\end{equation}
We claim that $\gcd(f_m(x),g_m(x))=1$. Otherwise,
if we let $d(x):=\gcd(f_m(x),g_m(x))$, then
$e:=\deg d(x)\ge1$ and
$$d(x^2)=\gcd(f_m(x^2),g_m(x^2)).$$
Then from (\ref{eq2.3}) we can deduce that
$d(x^2)\mid (x-1)^m$. So we must have
$d(x^2)=(x-1)^l$ for some integer $1\le l\le m$.
Furthermore, we have
$$l=\deg d(x^2)=2\deg d(x)=2e.$$
Hence $d(x^2)=(x-1)^{2e}$ which is impossible
since the left-hand side contain no terms of odd
degree but the right-hand side contains the terms
of odd degree. So the claim is proved.

It is obvious that $|\det A_m|=|\det B_m|$.
So by Lemma 2.1, we have
$$|\det A_m|=|\det B_m|\not=0$$
as required. Lemma 2.2 is proved.
\end{proof}

\begin{lemma} \cite{[Er]} Let $m$ and $k$ be
nonnegative integers such that $k\le m$. Then
$$\upsilon_2(\binom{m}{k})=\sum_{i=1}^{\infty}
\Big(\Big\lfloor \frac{m}{2^i}\Big\rfloor
-\Big\lfloor\frac{k}{2^i}\Big\rfloor-
\Big\lfloor\frac{m-k}{2^i}\Big\rfloor\Big).$$
\end{lemma}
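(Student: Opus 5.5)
This is Legendre's formula for binomial coefficients, so we reduce it to Legendre's formula for factorials. The plan is first to prove, for every nonnegative integer $n$,
$$\upsilon_2(n!)=\sum_{i=1}^{\infty}\Big\lfloor\frac{n}{2^i}\Big\rfloor,$$
and then to use $\binom{m}{k}=\frac{m!}{k!\,(m-k)!}$, which is valid because $0\le k\le m$. Additivity of $\upsilon_2$ then gives
$$\upsilon_2\Big(\binom{m}{k}\Big)=\upsilon_2(m!)-\upsilon_2(k!)-\upsilon_2((m-k)!).$$
Each of the three series has only finitely many nonzero terms, since $\lfloor n/2^i\rfloor=0$ once $2^i>n$. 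So they may be subtracted termwise, and this yields exactly the stated formula.

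For the factorial formula we count with multiplicity by double counting. Write
$$\upsilon_2(n!)=\sum_{j=1}^n\upsilon_2(j)=\sum_{j=1}^n\sum_{i\ge1}\mathbbm{1}[2^i\mid j].$$
Here we use $\upsilon_2(j)=\#\{i\ge1: 2^i\mid j\}$, which holds because $2^i\mid j$ exactly for $1\le i\le\upsilon_2(j)$. Interchanging the two finite sums gives $\sum_{i\ge1}\#\{1\le j\le n: 2^i\mid j\}$. The number of multiples of $2^i$ in $\{1,\dots,n\}$ is $\lfloor n/2^i\rfloor$, which proves the factorial formula.

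There is no genuine obstacle here. The only points needing care are the finiteness of the sums, which justifies the interchange of summation and the termwise subtraction, and the convention $\upsilon_2(0!)=0$ for the edge cases $k=0$ and $k=m$, where the formula holds trivially. As a remark, each summand $\lfloor m/2^i\rfloor-\lfloor k/2^i\rfloor-\lfloor (m-k)/2^i\rfloor$ lies in $\{0,1\}$ and equals the carry in the $i$-th position when $k$ and $m-k$ are added in base $2$. This gives Kummer's form of the result, which is presumably how it will be applied later to the binomial entries of $A_m$.
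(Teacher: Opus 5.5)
Your proof is correct. It is the standard derivation: you prove Legendre's formula $\upsilon_2(n!)=\sum_{i\ge1}\lfloor n/2^i\rfloor$ by double counting and then apply it to $m!/(k!\,(m-k)!)$. The paper gives no proof of this lemma and simply cites it from [Er], so there is no different route to compare against.

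Your closing remark that each summand $\lfloor m/2^i\rfloor-\lfloor k/2^i\rfloor-\lfloor (m-k)/2^i\rfloor$ lies in $\{0,1\}$ is worth keeping. The paper relies on the nonnegativity of these summands for $i\ge 2$ in (2.6), and again implicitly in Case 2, when proving Lemma 2.4, and it states this without justification.
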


\begin{lemma} Let $m\ge 2$ be an integer. Then
$2^{\lfloor \frac{m}{2}\rfloor}|\det A_m$.
\end{lemma}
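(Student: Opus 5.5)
Rather than performing 2-adic row reductions on $A_m$ directly, I would use Lemma 2.2, which gives $|\det A_m|=|\det B_m|$, and then evaluate $\det B_m=\pm\operatorname{Res}(f_m,g_m)$ in closed form through the roots of unity hidden in identity (\ref{eq2.3}). Replacing $x$ by $-x$ in (\ref{eq2.3}) gives
$$g_m(x^2)+xf_m(x^2)=(1+x)^m .$$
Together with (\ref{eq2.3}) this yields
$$xf_m(x^2)=\tfrac12\big((1+x)^m-(1-x)^m\big),\qquad g_m(x^2)=\tfrac12\big((1+x)^m+(1-x)^m\big).$$
Since $f_m,g_m$ are coprime (shown in the proof of Lemma 2.2), the resultant is a nonzero integer. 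The claim $2^{\lfloor m/2\rfloor}\mid\det A_m$ then follows once $v_2(\operatorname{Res}(f_m,g_m))\ge\lfloor m/2\rfloor$.

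\emph{Odd $m$.} Here $f_m$ is monic of degree $(m-1)/2$, so $\operatorname{Res}(f_m,g_m)=\pm\prod_{f_m(\alpha)=0}g_m(\alpha)$. Write each root as $\alpha=\beta^2$. Then $f_m(\beta^2)=0$ forces $(1+\beta)^m=(1-\beta)^m$, and hence $g_m(\beta^2)=(1+\beta)^m$. Both $\pm\beta$ give the same $\alpha$, so
$$\operatorname{Res}(f_m,g_m)^2=\pm\prod_{\beta}(1+\beta)^m,$$
where $\beta$ runs over the $m-1$ roots of $f_m(x^2)$. These roots are $\beta=(\zeta-1)/(\zeta+1)$ with $\zeta^m=1$, $\zeta\ne1$, and $1+\beta=2\zeta/(\zeta+1)$. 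Evaluating $\prod_{\zeta\ne1}(x-\zeta)=(x^m-1)/(x-1)$ at $x=-1$ gives $\prod_{\zeta\neq 1}(1+\zeta)=\pm1$. Also $\prod\zeta=\pm1$. Hence $\operatorname{Res}^2=\pm2^{m(m-1)}$, so $|\det A_m|=2^{m(m-1)/2}$, and $m(m-1)/2\ge\lfloor m/2\rfloor$.

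\emph{Even $m$.} Now $g_m$ is monic of degree $m/2$, so I would use $\operatorname{Res}=\pm\prod_{g_m(\gamma)=0}f_m(\gamma)$. Write $\gamma=\beta^2$. From $g_m(\beta^2)=0$ we get $(1+\beta)^m=-(1-\beta)^m$, so $\beta f_m(\beta^2)=(1+\beta)^m$. Put $\zeta=(1+\beta)/(1-\beta)$; then $\zeta^m=-1$, $\beta=(\zeta-1)/(\zeta+1)$ and $1+\beta=2\zeta/(\zeta+1)$. Squaring the product as before gives
$$\operatorname{Res}^2=\pm\prod_{\zeta^m=-1}\Big(\frac{2\zeta}{\zeta+1}\Big)^m\frac{\zeta+1}{\zeta-1}.$$
The auxiliary products are computed from $x^m+1$ at $x=\pm1$: $\prod_{\zeta^m=-1}(1+\zeta)=2$ and $\prod_{\zeta^m=-1}(\zeta-1)=\pm 2$. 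This gives $\operatorname{Res}^2=\pm2^{m^2}\cdot 2^{-m}$. Hence $v_2(\det A_m)=m(m-1)/2\ge m/2$.

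The main obstacle is the bookkeeping of signs and the normalizing factors $\tfrac12$ and leading coefficients. If the root-of-unity computation gets tangled, I would fall back on the weaker statement actually needed. Every $\alpha$ is a unit times $4/(\zeta+1)^2$-type algebraic integers. It then suffices to check that each factor $g_m(\alpha)$ (respectively $f_m(\gamma)$) has 2-adic valuation at least about $m$ times $v_2(1+\beta)$, working in a 2-adic extension where $v_2(1+\beta)=1-v_2(\zeta+1)$. Summing over the roots then gives at least $\lfloor m/2\rfloor$.
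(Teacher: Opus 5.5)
Your argument is correct, but it takes a genuinely different route from the paper's.

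\textbf{The paper's route.} The paper never leaves $A_m$.
\begin{itemize}
\item For even $m$, each of the $m/2$ odd-indexed rows has entries $-\binom{m}{2j+1}$ or $0$. These are all even because $m$ is even.
\item For odd $m$, it subtracts row $2k+1$ from row $2k+2$. This turns each of the $(m-1)/2$ even-indexed rows into a row whose nonzero entries are $\binom{m+1}{2j+1}$, all even because $m+1$ is even.
\end{itemize}
That is a short parity count, independent of Lemma 2.2 and of resultant theory. It gives exactly the weak bound that Lemma 2.5 needs, together with the nonvanishing from Lemma 2.2.

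\textbf{Your route.} You pass through $|\det A_m|=|\det B_m|=|\mathrm{Res}(f_m,g_m)|$ and evaluate the resultant via the substitution $\beta=(\zeta-1)/(\zeta+1)$. The computation is right and yields the exact value $|\det A_m|=2^{m(m-1)/2}$ for every $m\ge 2$. For $m=2,\dots,6$ this gives $2,8,64,1024,2^{15}$, which agrees with direct evaluation.

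This buys much more than the lemma asks for. It absorbs the nonvanishing part of Lemma 2.2, so the coprimality argument there becomes unnecessary, and it determines $\prod_i|\lambda_i|$ exactly.

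The price is that you rely on identifying $A_m$ with a row-permuted, sign-changed Sylvester matrix written in ascending coefficient order, which the paper only calls obvious. It is true: the odd rows of $A_m$ are the negatives of the $f_m$-rows of $B_m$, and the even rows are its $g_m$-rows. You should state this explicitly.

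\textbf{Points to tighten.}
\begin{itemize}
\item Record that the numbers $(\zeta-1)/(\zeta+1)$ are well defined ($\zeta\neq-1$ by the parity of $m$), nonzero, and pairwise distinct (M\"obius map). By a degree count they are then all the roots of $f_m(x^2)$, respectively $g_m(x^2)$.
\item The signs $\pm$ you carry are harmless, since $\mathrm{Res}^2$ is the square of an integer; in fact it equals $2^{m(m-1)}$.
\item The closing 2-adic fallback paragraph is vague and not needed, so drop it.
\end{itemize}
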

\begin{proof}
We divide the proof into the following two cases.

{\sc Case 1}. $m$ is even. Then $A_m=A_m^{(2)}$.
By Lemma 2.3, one knows that for any
$k\in\{0,1,...,\frac{m}{2}-1\}$, we obtain that
\begin{align}\label{eq2.3'}
\upsilon_2(\binom{m}{2k+1})
=\sum_{i=1}^{\lfloor \log m/\log 2\rfloor}
\Big(\Big\lfloor\frac{m}{2^i}\Big\rfloor
-\Big\lfloor\frac{2k+1}{2^i}\Big\rfloor
-\Big\lfloor \frac{m-2k-1}{2^i}\Big\rfloor\Big).
\end{align}

Since $m$ is even, one may let $m=2m_1$ with $m_1$
being an integer. Hence
\begin{align}\label{eq2.3''}
&\Big\lfloor\frac{m}{2}\Big\rfloor
-\Big\lfloor\frac{2k+1}{2}\Big\rfloor
-\Big\lfloor \frac{m-2k-1}{2}\Big\rfloor\notag\\
=& m_1-\Big\lfloor k+\frac{1}{2}\Big\rfloor
-(m_1-k+\Big\lfloor\frac{-1}{2}\Big\rfloor)=1.
\end{align}
Noticing that for any integer $i$ with $2\le i\le\lfloor\log m/\log 2\rfloor$,
\begin{align}\label{eq2.3'''}
\Big\lfloor\frac{m}{2^i}\Big\rfloor
-\Big\lfloor\frac{2k+1}{2^i}\Big\rfloor
-\Big\lfloor \frac{m-2k-1}{2^i}\Big\rfloor\ge 0,
\end{align}
it then follows from (2.4), (2.5) and (2.6) that for any
$k\in\{0,1,...,\frac{m}{2}-1\}$, one has
$$\upsilon_2(\binom{m}{2k+1})\ge 1.$$
Thus $2\mid\binom{m}{2k+1}$ for any $k\in\{0,1,...,\frac{m}{2}-1\}$.
In other words, for each $k\in\{0,1,...,\frac{m}{2}-1\}$,
all the elements in each $(2k+1)$-th row vector is
divisible by 2. This implies that $2^{\frac{m}{2}}|\det A_m$
as required. Lemma 2.4 is proved in this case.

{\sc Case 2}. $m$ is odd. Then $A_m=A_m^{(1)}$. For any
$k\in\{0,1,...,\frac{m-3}{2}\}$, first letting all the elements of
the $(2k+1)$-th row of $A_m$ multiply by $-1$, and then adding to
the corresponding terms of the $(2k+2)$-th row, we obtain a new
matrix, says $C_m$. Then $\det A_m=\det C_m$. Clearly, all the
nonzero elements of the $(2k+2)$-th row of $C_m$ are given by
$$\binom{m}{2k+1}+\binom{m}{2k}=\binom{m+1}{2k+1}$$
for $0\le k\le\frac{m-3}{2}$. For these terms, we compute their
2-adic valuation as follows:
\begin{align*}
\upsilon_2(\binom{m}{2k+1}+\binom{m}{2k})
=& \upsilon_2(\binom{m+1}{2k+1})\\
=& \sum_{i=1}^{\infty}\Big(\Big\lfloor\frac{m+1}{2^i}\Big\rfloor
-\Big\lfloor\frac{2k+1}{2^i}\Big\rfloor
-\Big\lfloor\frac{m-2k}{2^i}\Big\rfloor\Big).
\end{align*}

Since $m$ is odd, we let $m=2m_1+1$. Then $\lfloor\frac{m}{2}\rfloor=m_1$
and for any $k\in\{0,1,...,\frac{m-3}{2}\}$, we have
$$\Big\lfloor\frac{m+1}{2}\Big\rfloor-\Big\lfloor\frac{2k+1}{2}\Big\rfloor
-\Big\lfloor\frac{m-2k}{2}\Big\rfloor
=(m_1+1)-k-\Big\lfloor m_1-k+\frac{1}{2}\Big\rfloor=1.$$
From the above identities, we can derive that
$$\upsilon_2(\binom{m+1}{2k+1})\ge 1$$
for $0\le k\le\frac{m-3}{2}=m_1-1$. It follows that for each
$0\le k\le m_1-1$, all the nonzero elements (and hence all the
elements) of the $(2k+2)$-th row of $C_m$ are divisible by 2. It follows that
$$2^{\frac{m-1}{2}}=2^{m_1}\mid \det C_m.$$
Namely, one has the desired result $2^{\lfloor\frac{m}{2}\rfloor}|\det A_m$.
So Lemma 2.4 is proved in this case.
\end{proof}

\begin{lemma}
Let $m\ge 3$ be an integer. Then there exists an eigenvalue
of $A_m$, denoted by $\lambda_1$, such that $|\lambda_1|>1$.
\end{lemma}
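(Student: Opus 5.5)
The plan is to combine Lemma 2.2 and Lemma 2.4 through the product of eigenvalues. Let $\lambda_1,\dots,\lambda_{m-1}$ be the eigenvalues of $A_m$ over $\mathbb{C}$, listed with algebraic multiplicity. Then
$$\prod_{i=1}^{m-1}\lambda_i=\det A_m .$$
By Lemma 2.2 we have $\det A_m\neq 0$, so $\det A_m$ is a nonzero integer. By Lemma 2.4 it is divisible by $2^{\lfloor m/2\rfloor}$. Hence
$$|\det A_m|\ge 2^{\lfloor m/2\rfloor}\ge 2,$$
since $m\ge 3$ gives $\lfloor m/2\rfloor\ge 1$.

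Next, suppose for contradiction that every eigenvalue satisfies $|\lambda_i|\le 1$. Then
$$|\det A_m|=\prod_{i=1}^{m-1}|\lambda_i|\le 1,$$
which contradicts $|\det A_m|\ge 2$. So some eigenvalue, relabelled $\lambda_1$, satisfies $|\lambda_1|>1$.

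The argument is short. The substantive input is already packaged in Lemma 2.2 (nonvanishing, via the coprimality of $f_m$ and $g_m$) and Lemma 2.4 (2-adic divisibility). The only step that needs care is that the divisibility in Lemma 2.4 actually forces $|\det A_m|\ge 2$. This relies on the nonvanishing from Lemma 2.2, since $0$ is divisible by every power of $2$. So I would state explicitly that both lemmas are used together.

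Incidentally, the same argument yields the stronger bound $\max_i|\lambda_i|\ge 2^{\lfloor m/2\rfloor/(m-1)}>1$, which may be useful later.
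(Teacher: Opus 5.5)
Your proposal is correct and follows the paper's proof essentially verbatim: the paper also writes $\det A_m=\prod_{i=1}^{m-1}\lambda_i$ and combines Lemma 2.2 (nonvanishing) with Lemma 2.4 (divisibility by $2^{\lfloor m/2\rfloor}$) to get $|\det A_m|\ge 2^{\lfloor m/2\rfloor}\ge 2>1$, which forces some $|\lambda_i|>1$. You are right that the nonvanishing is what turns the 2-adic divisibility into a lower bound. The paper uses this implicitly when it cites both lemmas together.
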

\begin{proof} First of all, the order of $A_m$ is $m-1\ge 2$ since
$m\ge 3$. Let $\lambda_1,...,\lambda_{m-1}$ be the $m-1$ eigenvalues
of $A_m$. Then $\det A_m=\prod_{i=1}^{m-1}\lambda_i$. Then by Lemmas
2.2 and 2.4, we deduce that
$$\prod_{i=1}^{m-1}|\lambda_i|=\Big|\prod_{i=1}^{m-1}\lambda_i\Big|
=|\det A_m|\ge 2^{\lfloor \frac{m}{2}\rfloor}\ge 2>1.$$
Hence there is at least one of $\lambda_1,...,\lambda_{m-1}$,
says $\lambda_1$, such that $|\lambda_1|>1$.

So Lemma 2.5 is proved.
\end{proof}

By Lemma 2.5 and the Jordan canonical form, we may let
$$A_m=P^{-1}K_mP,$$
where $P$ is a nonsingular matrix of order $m-1$ and
$$K_m:={\rm diag}(J_{\lambda_1}, A_{1,m})$$
with
$$J_{\lambda_1}=\lambda_1 I_{r}+N_{r} (1\le r\le m-1)$$
is the first Jordan block, $I_{r}$ is the identity matrix of order $r$
and $N_{r}$ is the nilpotent matrix satisfying $N_r^r=O_{r}$
with $O_r$ being the zero matrix of order $r$.

\begin{lemma} \cite{[MG]}
Let $m$ be a positive integer. Then $t_m(0)=1,t_m(1)=-m$
and
$$t_m(2n)=\sum_{j=0}^{\lfloor\frac{m}{2}\rfloor}
\Big(^m_{2j}\Big) t_m(n-j),$$
$$t_m(2n+1)=-\sum_{j=0}^{\lfloor\frac{m-1}{2}
\rfloor}\Big(^{\;\: m}_{2j+1}\Big) t_m(n-j),$$
where $t_m(n):=0$ if $n<0$.
\end{lemma}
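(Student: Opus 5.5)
The plan is to read the two recurrences directly off the functional equation $F_m(x)=(1-x)^mF_m(x^2)$ stated in the introduction, by comparing coefficients of even and odd powers of $x$. The initial values come from the constant and linear terms: since $F_m(x)\equiv (1-x)^m\equiv 1-mx \pmod{x^2}$ (because $F_m(x^2)\equiv 1 \pmod{x^2}$), we get $t_m(0)=1$ and $t_m(1)=-m$.

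For the recurrences, write $F_m(x^2)=\sum_{n\ge 0}t_m(n)x^{2n}$ and $(1-x)^m=\sum_{i=0}^m\binom{m}{i}(-1)^ix^i$. The coefficient of $x^N$ in the product is then
$$t_m(N)=\sum_{\substack{0\le i\le m,\ 2n'+i=N}}(-1)^i\binom{m}{i}t_m(n'),$$
with the convention $t_m(k)=0$ for $k<0$, which just extends the sum over all $n'\in\mathbb{Z}$.

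We then split by the parity of $N$, using $g_m$ and $f_m$ as in (\ref{eq2.3}).
\begin{itemize}
\item If $N=2n$, only even $i=2j$ with $0\le j\le\lfloor m/2\rfloor$ contribute, and $n'=n-j$. This gives $t_m(2n)=\sum_j\binom{m}{2j}t_m(n-j)$, the coefficient extraction from $g_m(x^2)F_m(x^2)$.
\item If $N=2n+1$, only odd $i=2j+1$ with $0\le j\le\lfloor (m-1)/2\rfloor$ contribute, again with $n'=n-j$. The sign $(-1)^{2j+1}=-1$ gives $t_m(2n+1)=-\sum_j\binom{m}{2j+1}t_m(n-j)$, which is the $-xf_m(x^2)F_m(x^2)$ part.
\end{itemize}

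There is no real obstacle here. The only point needing care is the index bookkeeping: the ranges of $j$ must match $\lfloor m/2\rfloor$ and $\lfloor (m-1)/2\rfloor$, and the vanishing convention must absorb the terms with $n-j<0$. Because $F_m(x^2)$ has no odd powers, each coefficient of $x^N$ receives contributions from exactly one parity class of $i$, so the two cases are clean.
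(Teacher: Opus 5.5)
Your proof is correct: splitting $(1-x)^m=g_m(x^2)-xf_m(x^2)$ as in \eqref{eq2.3} and comparing coefficients of $x^{2n}$ and $x^{2n+1}$ in $F_m(x)=(1-x)^mF_m(x^2)$ gives exactly the two recurrences with the right ranges of $j$. The initial values also come out right: the even recurrence at $n=0$ is tautological, so $t_m(0)=1$ has to come from the constant term of the product itself, as you have it, and then $t_m(1)=-m$ follows. The paper does not prove this lemma but simply cites \cite{[MG]}; your coefficient comparison is the standard derivation from the functional equation, so nothing is missing.
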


\begin{lemma}
For any positive integer $d$, we have
$$\left(
\begin{array}{c}
t_m(2^dn-1)\\
t_m(2^dn-2)\\
.\\
.\\
.\\
t_m(2^dn-m+1)\\
\end{array}
\right)=A_m^d
\left(\begin{array}{c}
t_m(n-1)\\
t_m(n-2)\\
.\\
.\\
.\\
t_m(n-m+1)\\
\end{array}
\right).
$$
\end{lemma}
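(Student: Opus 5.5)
Lemma 2.7 follows by induction on $d$ from the case $d=1$. That case is a matrix form of the recurrences in Lemma 2.6, applied with $n$ replaced by $n-1$, $n-2$, and so on. Indeed, if for every $n$ we have $v(2n)=A_m v(n)$, where
$$v(n):=(t_m(n-1),t_m(n-2),\dots,t_m(n-m+1))^T,$$
then applying this to $2^{d-1}n$ gives $v(2^dn)=A_m v(2^{d-1}n)$. The inductive hypothesis $v(2^{d-1}n)=A_m^{d-1}v(n)$ then yields $v(2^dn)=A_m^d v(n)$. So everything reduces to the case $d=1$.

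For $d=1$, I write the entries of $v(2n)$ as $t_m(2n-1-2s)$ and $t_m(2n-2-2s)$ with $s\ge 0$. These are the odd-indexed and even-indexed terms respectively, and they appear alternately, starting with the odd one. Since $2n-1-2s=2(n-1-s)+1$, Lemma 2.6 gives
$$t_m(2n-1-2s)=-\sum_{j}\binom{m}{2j+1}t_m(n-1-s-j),$$
which is a combination of the entries $t_m(n-1-s),t_m(n-2-s),\dots$ of $v(n)$ with coefficients $-\binom{m}{1},-\binom{m}{3},\dots$ starting in column $s+1$. Similarly, $2n-2-2s=2(n-1-s)$, so
$$t_m(2n-2-2s)=\sum_j\binom{m}{2j}t_m(n-1-s-j),$$
with coefficients $\binom{m}{0},\binom{m}{2},\dots$ starting in column $s+1$. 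This is exactly the row pattern of $A_m$. Row $2s+1$ carries the negated odd binomials starting at column $s+1$, and row $2s+2$ carries the even binomials starting at column $s+1$. The two blocks shift right by one column every two rows, which matches the displayed $A_m^{(1)}$ and $A_m^{(2)}$.

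The main point to check is the bookkeeping at the right edge. The vector $v(n)$ has only $m-1$ entries, so I must confirm that every $t_m(n-1-s-j)$ with a nonzero coefficient satisfies $1+s+j\le m-1$. In the odd-$m$ case the last row is row $m-1$, which gives $s=(m-3)/2$. Its largest index is $j=(m-1)/2$, so $1+s+j=m-1$, and the term fits exactly; the odd rows behave the same way. In the even-$m$ case the last row is row $m-1=2s+1$, so $s=(m-2)/2$, and the odd binomials have $j\le (m-2)/2$, which gives $1+s+j\le m-1$. The even rows are checked the same way.

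I also need the recurrences to hold for all $n$, including small $n$ where arguments become negative. This holds because $t_m(k)=0$ for $k<0$, and Lemma 2.6 remains valid there: both sides vanish, or the identity reduces to $t_m(0)=1$ and $t_m(1)=-m$, which is consistent with the functional equation $F_m(x)=(1-x)^mF_m(x^2)$. So I regard the recurrence as coming from comparing coefficients in that functional equation, which makes it valid for every integer $n$. With that, the case $d=1$ holds for every $n\ge 1$, and the induction goes through.
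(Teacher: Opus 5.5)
Your proof is correct and follows the same route as the paper. Like the paper, you derive the case $d=1$, namely $v(2n)=A_m v(n)$, row by row from the two recurrences of Lemma 2.6 (odd rows from the formula for $t_m(2k+1)$, even rows from the formula for $t_m(2k)$, shifting one column every two rows), and then iterate with $n$ replaced by $2^{d-1}n$. Your check that the last rows fit within the $m-1$ columns, and your remark that the recurrences hold for all integer arguments via $F_m(x)=(1-x)^mF_m(x^2)$, supply bookkeeping that the paper leaves implicit.
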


\begin{proof}
By Lemma 2.6, we have
\begin{align}
t_m(2n-1)=& -\sum_{j=0}^{\lfloor\frac{m-1}{2}\rfloor}
\binom{m}{2j+1}t_m(n-1-j)\nonumber\\
=& (-\binom{m}{1},-\binom{m}{3},...,-\binom{m}
{2\lfloor\frac{m-1}{2}\rfloor})_{1\times (\lfloor\frac{m-1}{2}\rfloor+1)}
\cdot \left(\begin{array}{c}
t_m(n-1)\\
t_m(n-2)\\
.\\
.\\
.\\
t_m(n-\lfloor\frac{m-1}{2}\rfloor-1)
\end{array}
\right)_{(\lfloor\frac{m-1}{2}\rfloor+1)\times 1}\nonumber\\
=& (-\binom{m}{1},-\binom{m}{3},...,
-\binom{m}{2\lfloor\frac{m-1}{2}\rfloor},0...,0)_{1\times (m-1)}\cdot
\left(\begin{array}{c}
t_m(n-1)\\
t_m(n-2)\\
.\\
.\\
.\\
t_m(n-\lfloor\frac{m-1}{2}\rfloor-1)\\
t_m(n-\lfloor\frac{m-1}{2}\rfloor-2)\\
.\\
.\\
.\\
t_m(n-m+1)
\end{array}
\right)_{(m-1)\times 1}\nonumber\\
\end{align}
and
\begin{align}
t_m(2n-2)=& \sum_{j=0}^{\lfloor\frac{m}{2}\rfloor}
\binom{m}{2j} t_m(n-1-j)\nonumber\\
=& (\binom{m}{0},\binom{m}{2},...,\binom{m}
{2\lfloor\frac{m}{2}\rfloor})_{1\times (\lfloor\frac{m}{2}\rfloor+1)}
\cdot \left(\begin{array}{c}
t_m(n-1)\\
t_m(n-2)\\
.\\
.\\
.\\
t_m(n-\lfloor\frac{m}{2}\rfloor-1)
\end{array}
\right)_{(\lfloor\frac{m}{2}\rfloor+1)\times 1}\nonumber\\
=& (\binom{m}{0},\binom{m}{2},...,
\binom{m}{2\lfloor\frac{m}{2}\rfloor},0...,0)_{1\times (m-1)}\cdot
\left(\begin{array}{c}
t_m(n-1)\\
t_m(n-2)\\
.\\
.\\
.\\
t_m(n-\lfloor\frac{m}{2}\rfloor-1)\\
t_m(n-\lfloor\frac{m}{2}\rfloor-2)\\
.\\
.\\
.\\
t_m(n-m+1)
\end{array}
\right)_{(m-1)\times 1}.\nonumber\\
\end{align}
Similarly, we have
\begin{align}
t_m(2n-3)=& -\sum_{j=0}^{\lfloor\frac{m-1}{2}\rfloor}
\binom{m}{2j+1}t_m(n-2-j)\nonumber\\
=& (-\binom{m}{1},-\binom{m}{3},...,-\binom{m}
{2\lfloor\frac{m-1}{2}\rfloor})_{1\times (\lfloor\frac{m-1}{2}\rfloor+1)}
\cdot \left(\begin{array}{c}
t_m(n-2)\\
t_m(n-3)\\
.\\
.\\
.\\
t_m(n-\lfloor\frac{m-1}{2}\rfloor-2)
\end{array}
\right)_{(\lfloor\frac{m-1}{2}\rfloor+1)\times 1}\nonumber\\
=& (0,-\binom{m}{1},-\binom{m}{3},...,
-\binom{m}{2\lfloor\frac{m-1}{2}\rfloor},0...,0)_{1\times (m-1)}\cdot
\left(\begin{array}{c}
t_m(n-1)\\
t_m(n-2)\\
.\\
.\\
.\\
t_m(n-\lfloor\frac{m-1}{2}\rfloor-2)\\
t_m(n-\lfloor\frac{m-1}{2}\rfloor-3)\\
.\\
.\\
.\\
t_m(n-m+1)
\end{array}
\right)_{(m-1)\times 1} \nonumber\\
\end{align}
and
\begin{align}
t_m(2n-4)=& \sum_{j=0}^{\lfloor\frac{m}{2}\rfloor}
\binom{m}{2j} t_m(n-1-j)\nonumber\\
=& (\binom{m}{0},\binom{m}{2},...,\binom{m}
{2\lfloor\frac{m}{2}\rfloor})_{1\times (\lfloor\frac{m}{2}\rfloor+1)}
\cdot \left(\begin{array}{c}
t_m(n-2)\\
t_m(n-3)\\
.\\
.\\
.\\
t_m(n-\lfloor\frac{m}{2}\rfloor-2)
\end{array}
\right)_{(\lfloor\frac{m}{2}\rfloor+1)\times 1}\nonumber\\
=& (0,\binom{m}{0},\binom{m}{2},...,
\binom{m}{2\lfloor\frac{m}{2}\rfloor},0...,0)_{1\times (m-1)}\cdot
\left(\begin{array}{c}
t_m(n-1)\\
t_m(n-2)\\
.\\
.\\
.\\
t_m(n-\lfloor\frac{m}{2}\rfloor-2)\\
t_m(n-\lfloor\frac{m}{2}\rfloor-3)\\
.\\
.\\
.\\
t_m(n-m+1)
\end{array}
\right)_{(m-1)\times 1}.\nonumber\\
\end{align}

Now we consider the linear relation between
$t_m(2n-1),t_m(2n-2),...,t_m(2n-m+1)$ and
$t_m(n-1),t_m(n-2),...,t_m(n-m+1)$. We have
\begin{align}\label{eq2.4}
\left(
\begin{array}{c}
t_m(2n-1)\\
t_m(2n-2)\\
.\\
.\\
.\\
t_m(2n-m+1)\\
\end{array}
\right)=A_m
\left(
\begin{array}{c}
t_m(n-1)\\
t_m(n-2)\\
.\\
.\\
.\\
t_m(n-m+1)\\
\end{array}
\right).
\end{align}
It then follows that
$$\left(
\begin{array}{c}
t_m(2^dn-1)\\
t_m(2^dn-2)\\
.\\
.\\
.\\
t_m(2^dn-m+1)\\
\end{array}
\right)=A_m
\left(
\begin{array}{c}
t_m(2^{d-1}n-1)\\
t_m(2^{d-1}n-2)\\
.\\
.\\
.\\
t_m(2^{d-1}n-m+1)\\
\end{array}
\right)
=...
=A_m^d
\left(
\begin{array}{c}
t_m(n-1)\\
t_m(n-2)\\
.\\
.\\
.\\
t_m(n-m+1)\\
\end{array}
\right)
$$
as desired.

This finishes the proof of Lemma 2.7.
\end{proof}

Since $A_m=P^{-1}K_mP$, we have $PA_m^d=K_m^d P$. It follows from
Lemma 2.7 and $K_m:={\rm diag}\{J_{\lambda_1}, A_{1,m}\}$ that
\begin{align}
P\left(
\begin{array}{c}
t_m(2^dn-1)\\
t_m(2^dn-2)\\
.\\
.\\
.\\
t_m(2^dn-m+1)\notag\\
\end{array}
\right)
=& K_m^dP
\left(
\begin{array}{c}
t_m(n-1)\\
t_m(n-2)\\
.\\
.\\
.\\
t_m(n-m+1)\\
\end{array}
\right)\\
=&{\rm diag}(J^d_{\lambda_1},A_{1,m}^d)P
\left(
\begin{array}{c}
t_m(n-1)\\
t_m(n-2)\\
.\\
.\\
.\\
t_m(n-m+1)\\
\end{array}
\right).
\end{align}

Let
\begin{equation}\label{eq1}
\overrightarrow{b_n}:
=P\left(\begin{array}{c}
t_m(n-1)\\
t_m(n-2)\\
.\\
.\\
.\\
t_m(n-m+1)\\
\end{array}\right).
\end{equation}
Then
\begin{equation}\label{eq2}
\overrightarrow{b_{2^dn}}={\rm diag}(J^d_{\lambda_1}, A_{1,m}^d) \overrightarrow{b_n}.
\end{equation}

In what follows, we define $(\overrightarrow{b_n})_i$ to be the
$i$-th component of $\overrightarrow{b_n}$. In other words,
$$\overrightarrow{b_n}=((\overrightarrow{b_n})_1,
(\overrightarrow{b_n})_2,...,(\overrightarrow{b_n})_{m-1})^T.$$

Since ${\rm rank}(P)=m-1$, we have
$\overrightarrow{b_n}\not=\overrightarrow{0}
\iff P^{-1}\overrightarrow{b_n}\not=\overrightarrow{0}$.

\begin{definition}
We say that the sequence $\{a_n\}_{n=1}^\infty$ of complex numbers is
{\it periodic} if there exists a positive integer $l$ such that
$a_n=a_{n+l}$ for any positive integer $n$.
\end{definition}

\begin{definition}
We say that $a$ is a {\it limit point} of the sequence $\{a_n\}_{n=1}^\infty$
if there is a convergent subsequence $\{b_n\}_{n=1}^\infty$ of
$\{a_n\}_{n=1}^\infty$ such that $\lim_{n\rightarrow\infty}b_n=a$.
\end{definition}

For the simplicity of the proofs given below, we use the symbol $\Delta$
to denote the shift operator which is defined in the following.

\begin{definition}
The {\it shift operator} $\Delta$ is defined inductively to
act on the sequence $\{b_n\}_{n=1}^\infty$ by
$\Delta(b_n):=b_{n+1}$ for any nonnegative integer $n$.

For any $i\in\mathbb{N}_+$, let $\Delta^0(b_n):=b_n$ (we use $I$ to
denote the symbol $\Delta^0$, the {\it identity operator}), and
$\Delta^i(b_n):=\Delta(\Delta^{i-1}(b_n))$
when $i\ge 1$. Then
$$\Delta^i(b_n)=b_{n+i} \ \forall i\ge 0.$$
Furthermore, for any given polynomial $g(x)=\sum_{i=0}^kc_ix^i$,
we define the polynomial operator $g(\Delta)$ as follows:
$$g(\Delta)(b_n):=\sum_{i=0}^kc_i\Delta^i(b_n).$$
Then
\begin{align}\label{2.13}
g(\Delta)(b_n)=\sum_{i=0}^kc_i b_{n+i}.
\end{align}
\end{definition}

The following result is known and is due to Bolzano and Weierstrass, see for example,
\cite[Theorem 3.24] {[Apostol]}.

\begin{lemma} \cite[Theorem 3.24] {[Apostol]} \label{lemma2.8}
If a bounded set $S$ in $\mathbf{R}^n$ contains infinitely many points, then
there is at least one point in $\mathbf{R}^n$ which is an accumulation point of $S$.
\end{lemma}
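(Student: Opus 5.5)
We would prove Lemma \ref{lemma2.8} by the classical bisection (nested boxes) argument, using only the completeness of $\mathbf{R}$ in the form of the nested interval property. Since $S$ is bounded, there is a real number $M>0$ with $S\subseteq Q_0:=[-M,M]^n$, a closed cube of side length $2M$. The plan is to build a decreasing sequence of closed cubes $Q_0\supseteq Q_1\supseteq Q_2\supseteq\cdots$, each containing infinitely many points of $S$, whose side lengths $2M/2^k$ tend to $0$. We then show that the unique common point of these cubes is an accumulation point of $S$.

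\emph{Construction.} Suppose $Q_k=\prod_{j=1}^n[a_j^{(k)},b_j^{(k)}]$ has been chosen, with all sides of length $2M/2^k$ and $Q_k\cap S$ infinite. Bisect each coordinate interval at its midpoint. This splits $Q_k$ into $2^n$ closed subcubes of side $2M/2^{k+1}$, whose union is $Q_k$. Since finitely many subcubes cover the infinite set $Q_k\cap S$, at least one of them contains infinitely many points of $S$. Let $Q_{k+1}$ be such a subcube. By induction this yields the desired nested sequence. For each coordinate $j$, the intervals $[a_j^{(k)},b_j^{(k)}]$ are nested and have lengths tending to $0$. By the nested interval property of $\mathbf{R}$ (equivalently, $\sup_k a_j^{(k)}$ exists and equals $\inf_k b_j^{(k)}$), they have exactly one common point $x_j$. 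Set $x:=(x_1,\dots,x_n)$, so that $x\in\bigcap_k Q_k$.

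\emph{Verification.} Let $B(x,\varepsilon)$ be any open ball around $x$. Every point of $Q_k$ lies within Euclidean distance $\sqrt{n}\,2M/2^k$ of $x$, because $x\in Q_k$ and $Q_k$ has diameter $\sqrt{n}\,2M/2^k$. Choose $k$ with $\sqrt{n}\,2M/2^k<\varepsilon$; then $Q_k\subseteq B(x,\varepsilon)$. Hence $B(x,\varepsilon)$ contains infinitely many points of $S$, in particular a point of $S$ different from $x$. Therefore $x$ is an accumulation point of $S$.

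The only step with real content is the existence of the common point $x$, which is exactly where the completeness of $\mathbf{R}$ enters; everything else is bookkeeping. For the later application to bounded sequences of vectors such as $\overrightarrow{b_n}$, we would also note the sequential form. If the sequence takes infinitely many values, we pick successive terms lying in $Q_1,Q_2,\dots$ with strictly increasing indices, which gives a subsequence converging to $x$. If it takes only finitely many values, some value repeats infinitely often, giving a constant subsequence. In either case we obtain a limit point in the sense of Definition 2.9.
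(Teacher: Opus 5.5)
Your proof is correct and is essentially the argument of the cited source. The paper gives no proof of its own and defers to Apostol's Theorem 3.24, which proceeds by exactly this repeated bisection of an enclosing $n$-dimensional interval, taking the limit point coordinatewise as the common value of the supremum of the left endpoints and the infimum of the right endpoints. Your closing remark on the sequential form is also worth keeping: Lemma~\ref{lemma2.9} actually uses the result to extract convergent subsequences, including from sequences taking only finitely many values, and the set version as stated does not literally cover that case.
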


\begin{lemma} \label{lemma2.9}
Let $c\in\mathbb{C}$ and let $\{a_n\}_{n=1}^{\infty}$ be a bounded sequence of
complex numbers with only finitely many limit points. Define
\begin{align*}
b_n:=(\Delta-cI)(a_n)=a_{n+1}-ca_n.
\end{align*}
Then the sequence \(\{b_n\}_{n=1}^{\infty}\) has only finitely many limit points.
\end{lemma}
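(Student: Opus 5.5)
The plan is a direct argument: every limit point of $\{b_n\}$ should have the form $\beta-c\alpha$, where $\alpha$ and $\beta$ are limit points of $\{a_n\}$. Since $\{a_n\}$ has only finitely many limit points, say $L=\{\ell_1,\dots,\ell_s\}$, this leaves at most $s^2$ candidate values. So the set of limit points of $\{b_n\}$ is contained in the finite set $\{\ell_j-c\ell_i: 1\le i,j\le s\}$.

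First, $\{b_n\}$ is bounded. If $|a_n|\le M$ for all $n$, then $|b_n|\le (1+|c|)M$. Next, fix a limit point $b$ of $\{b_n\}$, so there is a subsequence $b_{n_k}\to b$. The pairs $(a_{n_k},a_{n_k+1})$ form a bounded sequence in $\mathbb{C}^2\cong\mathbf{R}^4$.

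By Lemma \ref{lemma2.8} (or its sequential form, Bolzano--Weierstrass), we can pass to a further subsequence along which $a_{n_k}\to\alpha$ and $a_{n_k+1}\to\beta$. The rigorous point here is that Lemma \ref{lemma2.8} is stated for infinite sets, not sequences. If the set $\{(a_{n_k},a_{n_k+1})\}$ is finite, some value is repeated infinitely often and gives a constant subsequence. If it is infinite, we extract a subsequence converging to an accumulation point in the standard way.

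Both $\alpha$ and $\beta$ are limit points of $\{a_n\}$. For $\beta$, note that $\{a_{n_k+1}\}$ is still a subsequence of $\{a_n\}$, because the indices $n_k+1$ are strictly increasing. Along this further subsequence we have $b_{n_k}=a_{n_k+1}-ca_{n_k}\to\beta-c\alpha$. By uniqueness of limits, $b=\beta-c\alpha$.

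The only mildly delicate step is this extraction of a common convergent subsequence for the pairs, together with checking that the shifted indices still give a genuine subsequence. Everything else is immediate, and the set of limit points of $\{b_n\}$ has at most $s^2$ elements.
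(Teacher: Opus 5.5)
Your proposal is correct and follows essentially the same argument as the paper: pass to a subsequence along which both $a_{n_k}$ and $a_{n_k+1}$ converge, so every limit point of $\{b_n\}$ lies in the finite set $\{q-cp : p,q \text{ limit points of } \{a_n\}\}$. The only difference is that the paper extracts the two convergent subsequences one after the other rather than jointly in $\mathbb{C}^2$, and this does not matter.
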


\begin{proof}
Let $A$ denote the set of all limit points of $\{a_n\}_{n=1}^{\infty}$. By hypothesis, $A$ is a finite set.
Consider the set
$$
B := \{u-cv\mid u, v \in A\}.
$$
Since $A$ is finite, $B$ is also finite.

Now we prove that every limit point of $\{b_n\}_{n=1}^\infty$ belongs to $B$. Pick an arbitrary
limit point $w$ of $\{b_n\}_{n=1}^\infty$. Then there exists a strictly increasing sequence of
positive integers $\{k_n\}_{n=1}^{\infty}$ such that
\begin{align}\label{eq2.14}
\lim_{n\to\infty}(a_{k_n+1}-ca_{k_n})=\lim_{n\to \infty}b_{k_n}=w.
\end{align}
Since $\{a_n\}_{n=1}^\infty$ is bounded, the subsequence $\{a_{k_n}\}_{n=1}^{\infty}$ is bounded.
By the Bolzano-Weierstrass theorem {\rm {\cite[Theorem 3.24]{[Apostol]}}}, $\{a_{k_n}\}_{n=1}^{\infty}$
has a convergent subsequence, denoted by $\{a_{k_n'}\}_{n=1}^{\infty}$, where $\{k_n'\}_{n=1}^{\infty}$
is a subsequence of $\{k_n\}_{n=1}^\infty$, and write
\begin{align}\label{eq2.15}
\lim_{n \to \infty} a_{k_n'}:=p\in A.
\end{align}

Since the original subsequence $\{b_{k_n}\}_{n=1}^\infty$ converges to $w$, the subsequence $\{b_{k_n'}\}_{n=1}^\infty$
also converges to $w$. Since $\{a_n\}_{n=1}^\infty$ is bounded, so does the subsequence $\{a_{k_n'+1}\}_{n=1}^{\infty}$.
Applying the Bolzano-Weierstrass theorem again, the subsequence $\{a_{k_n'+1}\}_{n=1}^{\infty}$ has a convergent
subsequence, denoted it by $\{a_{k_n^{''}+1}\}_{n=1}^{\infty}$, where $\{k_n^{''}\}_{n=1}^{\infty}$
is a subsequence of $\{k_n'\}_{n=1}^\infty$, and write
\begin{align}\label{eq2.16}
\lim_{n\to \infty} a_{k_n^{''}+1}:=q\in A.
\end{align}
Notice that by \eqref{eq2.15}, one has
\begin{align}\label{eq2.17}
\lim_{n \to \infty} a_{k_n^{''}}=p\in A.
\end{align}

But $\{k_n^{''}\}_{n=1}^{\infty}$ is a subsequence of $\{k_n\}_{n=1}^\infty$. Then putting \eqref{eq2.14},
\eqref{eq2.16} and \eqref{eq2.17} together gives us that
\begin{align*}
w=\lim_{n\to \infty}(a_{k_n^{''}+1}-ca_{k_n^{''}})=q-cp\in B.
\end{align*}
In other words, every limit point of $\{b_n\}_{n=1}^\infty$ belongs to the finite set $B$.
Therefore the sequence $\{b_n\}_{n=1}^\infty$ has only finitely many limit points.

This finishes the proof of Lemma \ref{lemma2.9}.
\end{proof}

\begin{lemma}\label{lemma2.10}
Let $q_1$ and $\mu$ be complex numbers with $q_1\ne 0$ and for any
integer $n\ge 1$, let $q_n:=q_1\mu^{n-1}$ be such that
$\underset{n\rightarrow\infty}{\lim}|q_n|\ne \infty$. Then the sequence
$\{q_n\}_{n=1}^\infty$ has a finite number of limit point if and
only if either $|\mu|<1$, or there is an integer $m\ge 1$ such that
$\mu^m=1$.
\end{lemma}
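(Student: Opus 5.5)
Since $q_1\neq 0$, we have $|q_n|=|q_1|\,|\mu|^{n-1}$, so the case $|\mu|>1$ is excluded by the hypothesis $\lim_{n\to\infty}|q_n|\neq\infty$: if $|\mu|>1$ then $|q_n|\to\infty$. The proof therefore reduces to the two remaining regimes $|\mu|<1$ and $|\mu|=1$, and I would treat them separately.

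\emph{Sufficiency.} If $|\mu|<1$, then $q_n\to 0$. Every convergent subsequence then has limit $0$, so $0$ is the only limit point. If $\mu^m=1$ for some integer $m\ge 1$, then $q_{n+m}=q_1\mu^{n+m-1}=q_n$, so $\{q_n\}$ is periodic in the sense of Definition 2.8. It takes only the finitely many values $q_1,\dots,q_m$. Any convergent subsequence is eventually constant along some value taken infinitely often, so the limit points are contained in $\{q_1,\dots,q_m\}$, which is finite.

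\emph{Necessity.} Suppose the set of limit points is finite, and assume $|\mu|\ge 1$. By the reduction above, $|\mu|=1$, so I write $\mu=e^{2\pi i\theta}$ with $\theta\in[0,1)$. I then argue by contradiction that $\mu$ must be a root of unity. If $\mu$ is not a root of unity, then $\theta$ is irrational.

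This step is the main obstacle: I must show that for irrational $\theta$ the orbit $\{\mu^{n}\}$ has infinitely many limit points on the unit circle. The route I would try first avoids equidistribution and uses only pigeonhole and Lemma~\ref{lemma2.8}.

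1. The points $\mu^n$ are pairwise distinct, since $\mu^a=\mu^b$ with $a\neq b$ would make $\mu$ a root of unity.
2. By pigeonhole among any $N+1$ of them on the unit circle, for every $\varepsilon>0$ there exists $k\ge1$ with $0<|\mu^k-1|<\varepsilon$.
3. The powers $\mu^{jk}$ then march around the circle with step of arc length less than roughly $\varepsilon$. Hence every arc of length $\varepsilon$ contains some $\mu^n$, and indeed some $\mu^n$ with $n$ arbitrarily large, by starting at a large multiple of $k$.
4. Consequently every point $z$ of the unit circle is a limit point of $\{\mu^{n}\}$: take $\varepsilon=1/j$ and choose indices $n_j$ that are increasing and satisfy $|\mu^{n_j}-z|<1/j$.
5. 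The limit points of $\{q_n\}$ are therefore the entire circle $|z|=|q_1|$. This set is infinite, contradicting the hypothesis.

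It follows that $\mu$ is a root of unity, and together with the case $|\mu|<1$ this completes the necessity direction.

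A more conservative variant of steps 3 and 4 exhibits infinitely many distinct limit points directly. For distinct residues, one would apply Lemma~\ref{lemma2.8} to subsequences $\{\mu^{nk}\}$ along the arithmetic progressions $n\equiv r\pmod{k}$ and check that they produce distinct limits. The density argument above is cleaner, though, so I would present that one.
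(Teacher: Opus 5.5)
Your proof is correct and has the same structure as the paper's. You exclude $|\mu|>1$ via the hypothesis $\lim|q_n|\neq\infty$, treat $|\mu|<1$ and roots of unity directly, and for $|\mu|=1$ with $\mu$ not a root of unity show that the limit set is infinite. The paper cites Weyl's theorem for the density of $\{n\theta\}$, whereas you prove density of the orbit with a self-contained pigeonhole (Dirichlet) argument, and you correctly identify the limit set as the whole circle $|z|=|q_1|$ (the paper's text says ``the interval $(0,1)$'', which is a slip).
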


\begin{proof}
First of all, we show the necessity part. Let the sequence
$\{q_n\}_{n=1}^\infty$ have a finite number of limit point.
Suppose that $|\mu |>1$. Since $q_1\ne 0$, we deduce that
$|q_n|=|q_1| |\mu |^{n-1}\rightarrow\infty$ as $n\rightarrow\infty$.
This contradicts with the hypothesis that $\underset{n\rightarrow\infty}
{\lim}|q_n|\ne \infty$. Hence we have either $|\mu|<1$ or $|\mu|=1$.
Let now $|\mu|=1$. Then we can write $\mu =e^{i\theta}$ with
$\theta\in (0, 2\pi]$.

If $\frac{\theta}{2\pi}\not\in\mathbb{Q}$, then by \cite{[WE]}, one knows
that the sequence $\{\frac{m\theta}{2\pi}-\lfloor\frac{m\theta}{2\pi}\rfloor\}_{m=1}^\infty$
is dense in the interval $(0,1)$. This implies that the set of limit points
of the sequence $\{q_n\}$ is equal to the interval $(0,1)$. It follows that
the sequence $\{q_n\}_{n=1}^\infty$ has infinitely many limit points.
This is a contradiction. Thus we must have $\frac{\theta}{2\pi}\in\mathbb{Q}_{>0}$.
Let $\frac{\theta}{2\pi}=\frac{s}{t}$ with $s$ and $t$ beng positive integers.
Therefore
$$\mu^t=e^{i\cdot 2\pi\cdot \frac{s}{t}\cdot t}=e^{2\pi i s}=1.$$
So the necessity part is proved.

Next, we show the sufficiency part. Its proof is divided into the following two cases.

{\sc Case 1.} $|\mu |<1$. Then as $n\rightarrow\infty$, one has $n-1\rightarrow\infty$,
and so $|q_n|=|q_1| |\mu|^{n-1}\rightarrow 0$. This infers that $q_n\rightarrow 0$ as
$n\rightarrow\infty$. Hence 0 is the only limit point of the sequence $\{q_n\}_{n=1}^\infty$.
Hence the sufficiency part is true in this case.

{\sc Case 2.} $\mu^m=1$ for some integer $m\ge 1$. Then for any integer
$n\ge 2$, one has $n-1=ml+r$, where $l$ and $r$ are nonnegative integers
such that $l\ge 0$ and $0\le r\le m-1$. So
$$
q_n=q_1\mu^{n-1}=q_1\mu^{ml+r}=q_1(\mu^m)^l\mu^r=q_1\mu^r.
$$
It then follows that
$$
q_n=q_1\mu^r\in\{q_1, q_1\mu, q_1\mu^2,...,q_1\mu^{m-1}\}.
$$
That is, the set $\{q_1, q_1\mu, q_1\mu^2,...,q_1\mu^{m-1}\}$ equals
the set of all the limit points of the sequence $\{q_n\}_{n=1}^\infty$.
The sufficiency part is proved in this case.

The proof of Lemma \ref{lemma2.10} is complete.
\end{proof}

\begin{lemma}\label{lemma2.11'}\cite[Corollary 2.24]{[El]}
Let $\{a_n\}_{n=1}^\infty$ be a sequence of complex numbers. Let $\mu_1,...,\mu_r$
be $r\ge 1$ distinct complex numbers and let $m_1,...,m_r$ be $r$ positive integers.
Then for any integer $n\ge 1$, the general solution of the equation
\begin{align*}
(\Delta-\mu_1 I)^{m_1}(\Delta-\mu_2 I)^{m_2}\ldots
(\Delta-\mu_rI)^{m_r}(a_n)=0
\end{align*}
is given by
$$a_n=\sum_{i=1}^r(a_{i,0}+a_{i,1}n+a_{i,2}n^2+\ldots +
a_{i,m_{i}-1}n^{m_i-1})\mu_i^{n}$$
where $a_{i,j}\in\mathbb{C}$ for all $1\le i\le r$ and $1\le j\le m_i-1$.
\end{lemma}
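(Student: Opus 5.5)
The plan is the standard linear-algebra argument for constant-coefficient linear recurrences. Put $N:=m_1+\cdots+m_r$ and $p(x):=\prod_{i=1}^r(x-\mu_i)^{m_i}=x^N+c_{N-1}x^{N-1}+\cdots+c_0$. By (\ref{2.13}), the equation $p(\Delta)(a_n)=0$ reads $a_{n+N}=-\sum_{k=0}^{N-1}c_ka_{n+k}$ for all $n\ge 1$. Let $V$ be the set of complex sequences satisfying it.

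\textbf{Step 1 ($V$ has dimension $N$).} $V$ is a complex vector space. The map $V\to\mathbb{C}^N$, $(a_n)\mapsto(a_1,\dots,a_N)$, is linear. It is bijective, because the recurrence has leading coefficient $1$: any initial values extend uniquely, by induction on $n$, to a sequence in $V$. Hence $\dim V=N$.

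Throughout, I assume all $\mu_i\neq 0$. This is the only case needed later, since the $\mu_i$ will be eigenvalues of blocks of $A_m$, and $\det A_m\neq0$ by Lemma 2.2. If some $\mu_i=0$, the factor $\Delta^{m_i}$ only allows the first $m_i$ terms to be arbitrary, and the formula must be read with $0^n$ interpreted accordingly. I would record this as a remark.

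\textbf{Step 2 (the proposed sequences lie in $V$).} For a polynomial $P$ and $\mu\neq 0$, compute
$$(\Delta-\mu I)(P(n)\mu^n)=\mu^{n+1}\bigl(P(n+1)-P(n)\bigr).$$
The factor $P(n+1)-P(n)$ has degree $\deg P-1$ (and is $0$ if $P$ is constant). By induction on $\deg P$, $(\Delta-\mu_iI)^{m_i}$ annihilates $n^j\mu_i^n$ whenever $j\le m_i-1$. Polynomials in $\Delta$ commute, so $p(\Delta)$ annihilates each $n^j\mu_i^n$. There are exactly $N$ such sequences.

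\textbf{Step 3 (linear independence).} This is the main step. Suppose $\sum_{i=1}^rP_i(n)\mu_i^n=0$ for all $n\ge1$, with $\deg P_i\le m_i-1$, and suppose $P_1\neq0$ (after relabeling). Apply $L:=\prod_{k\ge2}(\Delta-\mu_kI)^{m_k}$. By Step 2, $L$ kills every term with $i\ge2$. For $k\neq1$,
$$(\Delta-\mu_kI)(P(n)\mu_1^n)=\bigl(\mu_1P(n+1)-\mu_kP(n)\bigr)\mu_1^n.$$
The polynomial $\mu_1P(n+1)-\mu_kP(n)$ has the same degree as $P$, with leading coefficient multiplied by $\mu_1-\mu_k\neq0$. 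So $L(P_1(n)\mu_1^n)=Q(n)\mu_1^n$ with $Q\neq0$. Since $\mu_1\neq0$, $Q(n)=0$ for all $n\ge1$, which forces $Q=0$, a contradiction. Hence all $P_i=0$.

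\textbf{Conclusion.} Steps 1–3 give $N$ linearly independent elements of the $N$-dimensional space $V$, so they form a basis. Therefore every solution has the stated form.

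The only delicate points are keeping track of degrees in Step 3 and the caveat about $\mu_i=0$. Everything else is routine.
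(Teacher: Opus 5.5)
The paper does not prove this statement; it quotes it from Elaydi's book (Corollary 2.24 there), so there is no in-paper argument to match. Your proof is a correct, self-contained replacement.

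\begin{itemize}
\item The dimension count via the initial-value map is valid because the recurrence is monic.
\item The identity $(\Delta-\mu I)(P(n)\mu^n)=\mu^{n+1}(P(n+1)-P(n))$ correctly shows that the $N$ sequences $n^j\mu_i^n$, $0\le j\le m_i-1$, are solutions.
\item The independence step works. Each factor $\Delta-\mu_kI$ with $k\neq 1$ sends $P(n)\mu_1^n$ to $Q(n)\mu_1^n$ with $\deg Q=\deg P$ and leading coefficient multiplied by $\mu_1-\mu_k\neq 0$. A nontrivial relation would then force a nonzero polynomial to vanish at every $n\ge 1$.
\end{itemize}

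The standard textbook proof gets independence differently: it evaluates the Casoratian of the $N$ sequences at one index, which gives a confluent Vandermonde determinant. That route produces an explicit nonzero value. Your annihilator argument avoids computing any determinant and is more elementary. Both need $\mu_i\neq 0$.

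Your caveat on $\mu_i=0$ is necessary. As literally stated (arbitrary distinct $\mu_i$, indices $n\ge 1$), the statement is false. For example, with $r=1$, $\mu_1=0$, $m_1=1$, the equation $a_{n+1}=0$ leaves $a_1$ free, whereas the formula only produces the zero sequence. So the nonzero hypothesis must be added; the standard setting for this result assumes the constant coefficient of the characteristic polynomial is nonzero.

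However, your reason why $\mu_i\neq 0$ holds in the application is wrong.
\begin{itemize}
\item The statement is used only in Lemma \ref{lemma2.11} (the decomposition $a_n=\alpha_n+\beta_n$). There the $\mu_i$ are the roots of $g(x)=\sum_{i=0}^k c_ix^i$.
\item In the proof of the main theorem, these coefficients come from the $r$-th row of $P$, so the roots of $g$ have no reason to be eigenvalues of $A_m$. The fact $\det A_m\neq 0$ is therefore irrelevant.
\item What guarantees $\mu_i\neq 0$ is the hypothesis $c_0c_k\neq 0$ of that lemma. The main proof secures it by taking $c_{i_0}$ and $c_{j_0}$ to be the extreme nonzero coefficients.
\end{itemize}
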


\begin{lemma}\label{lemma2.11}
Let $\{a_n\}_{n=1}^\infty$ be a bounded sequence with finitely
many limit points. If there is an integer $k\ge 1$ and there are
$k+1$ complex numbers $c_0,c_1,...,c_k\in\mathbb{C}$
with $c_0c_k\ne 0$ such that $\sum_{i=0}^kc_ia_{n+i}=0$
for any positive integer $n$, then $a_n=\alpha_n+\beta_n$,
where $\{\alpha_n\}_{n=1}^\infty$ is a periodic sequence and
$\lim_{n\rightarrow\infty}|\beta_n|=0$.
\end{lemma}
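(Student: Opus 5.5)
Set $g(x)=\sum_{i=0}^k c_ix^i$, so the hypothesis says $g(\Delta)(a_n)=0$ for all $n\ge 1$. Since $c_k\ne 0$, $g$ has degree $k$, and we factor it over $\mathbb{C}$ as
$$g(x)=c_k\prod_{i=1}^r(x-\mu_i)^{m_i}$$
with distinct roots $\mu_1,\dots,\mu_r$. Because $c_0\ne 0$, none of the $\mu_i$ is zero. By Lemma \ref{lemma2.11'},
$$a_n=\sum_{i=1}^r P_i(n)\mu_i^n,$$
where each $P_i$ is a polynomial of degree at most $m_i-1$. We may discard the indices with $P_i\equiv 0$. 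The goal is to show that the remaining terms can be sorted as follows: the terms with $|\mu_i|<1$ go to zero, and the terms with $|\mu_i|\ge 1$ have $P_i$ constant and $\mu_i$ a root of unity. Then $\alpha_n:=\sum_{|\mu_i|=1}P_i\mu_i^n$ is periodic, with period the lcm of the orders of these $\mu_i$, and $\beta_n:=\sum_{|\mu_i|<1}P_i(n)\mu_i^n$ tends to zero, since a polynomial times $\mu^n$ with $|\mu|<1$ tends to $0$.

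\textbf{Step 1: no roots with $|\mu_i|>1$, and constant coefficients on the unit circle.} The key tool is a reduction step built on Lemma \ref{lemma2.9}. Applying $(\Delta-\mu_jI)$ to $a_n$ kills one power of $(x-\mu_j)$ and preserves the property of having finitely many limit points. Boundedness is also preserved, since it is a finite linear combination of shifts. So for any divisor $h$ of $g$, the sequence $h(\Delta)(a_n)$ is bounded with finitely many limit points.

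Fix $i_0$ and let $d=\deg P_{i_0}$. Apply
$$h(\Delta)=(\Delta-\mu_{i_0}I)^{d}\prod_{j\ne i_0}(\Delta-\mu_jI)^{m_j}.$$
This annihilates every term except $P_{i_0}(n)\mu_{i_0}^n$. On that term, $(\Delta-\mu I)$ applied to $Q(n)\mu^n$ gives $\mu(Q(n+1)-Q(n))\mu^n$, which lowers the degree of $Q$ by one. The factors $(\Delta-\mu_j)$ with $j\ne i_0$ multiply $\mu^n$ by the nonzero constant $(\mu_{i_0}-\mu_j)$ and keep the degree. The result is $q_1\mu_{i_0}^{n-1}$ with $q_1\ne 0$. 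This sequence is bounded and has finitely many limit points.

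Lemma \ref{lemma2.10} then gives either $|\mu_{i_0}|<1$ or $\mu_{i_0}$ a root of unity. Equivalently, from boundedness we get $|\mu_{i_0}|\le 1$, and on $|\mu|=1$ the condition on limit points forces a root of unity.

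\textbf{Step 2: the main obstacle.} It remains to show that $P_{i}$ is constant when $|\mu_i|=1$. The sequence $q_1\mu^{n-1}$ from Step 1 only sees the leading coefficient of $P_i$, so we must exclude $d\ge 1$ in a different way. Write
$$a_n=\sum_{|\mu_i|=1}P_i(n)\mu_i^n+o(1).$$
Let $D$ be the maximum of $\deg P_i$ over $|\mu_i|=1$, and suppose $D\ge1$. Each such $\mu_i$ is a root of unity, so let $L$ be the lcm of their orders. Along an arithmetic progression $n=Lt+s$, every $\mu_i^n=\mu_i^s$ is constant, and $a_{Lt+s}$ becomes a polynomial in $t$ of degree at most $D$, plus $o(1)$. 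Its coefficient of $t^D$ is
$$\sum_{\deg P_i=D}\mathrm{lc}(P_i)L^D\mu_i^s.$$
The functions $s\mapsto\mu_i^s$ are linearly independent characters on $\mathbb{Z}/L\mathbb{Z}$, and the leading coefficients are nonzero, so this quantity is nonzero for some $s$. For that $s$, $|a_{Lt+s}|\to\infty$, contradicting boundedness. Hence $D=0$ and the decomposition above holds. I expect this step to be the delicate point, and I would handle it exactly by this restriction to residue classes, using the Vandermonde nonsingularity for distinct $\mu_i$.
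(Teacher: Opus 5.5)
Your proof is correct. It follows the paper's skeleton: isolate one root with an annihilator, apply Lemmas \ref{lemma2.9} and \ref{lemma2.10} to get $|\mu_i|<1$ or $\mu_i$ a root of unity, then use Lemma \ref{lemma2.11'} and residue classes modulo the lcm of the orders to rule out polynomial growth. It differs from the paper at two points worth noting.

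First, the paper uses $q_{n,i}=\frac{g(\Delta)}{\Delta-\mu_iI}(a_n)$. This is identically zero whenever the coefficient polynomial $d_i$ of $\mu_i$ has degree $<m_i-1$. For instance, $a_n\equiv 1$ with $g(x)=(x-1)(x-2)$ gives $q_{n,2}\equiv 0$ with $\mu_2=2$. So the paper's appeal to Lemma \ref{lemma2.10}, which requires $q_1\neq 0$, is not justified in general. You discard the vanishing $P_i$ and use the exponent $d=\deg P_{i_0}$, which guarantees a nonzero geometric sequence, exactly what that lemma needs.

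Second, in the last step the paper treats each residue class $n_0$ separately and takes the top nonzero $c_T(n_0)$. This gives periodicity of $\alpha_n$ directly, with no linear independence of the characters $s\mapsto\mu_i^s$. Your global top-degree argument yields the finer statement that each $P_i$ with $|\mu_i|=1$ is constant, at the cost of the Vandermonde step.

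That step is also easier than you expected. When $d\ge 1$, apply $(\Delta-\mu_{i_0}I)^{d-1}\prod_{j\ne i_0}(\Delta-\mu_jI)^{m_j}$ instead. This leaves $(An+B)\mu_{i_0}^n$ with $A\neq 0$. That is a finite combination of shifts of $a_n$, hence bounded, which is impossible when $|\mu_{i_0}|=1$.
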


\begin{proof}
Let $g(x):=\sum_{i=0}^kc_ix^i$. Then by (\ref{2.13}), one has
$$g(\Delta)(a_n)=\sum_{i=0}^kc_ia_{n+i}.$$

Let $\{\mu_1,...,\mu_r\}$ be the all distinct roots of
$g(x)$ with multiplicities $m_1,\ldots,m_r$, respectively. Since
$c_0c_k\not=0$, we have $\mu_1^{m_1}\ldots\mu_r^{m_r}\ne 0$.
Without loss of generality, we may assume that $c_k=1$. Then
$$g(x)=\prod_{i=1}^r(x-\mu_i)^{m_i}$$
which implies that
$$g(\Delta)=\prod_{i=1}^r(\Delta-\mu_iI)^{m_i}.$$
For any integer $i$ with $1\le i\le r$, one defines
\begin{align}\label{eq2.18}
q_{n,i}:=\frac{g(\Delta)}{\Delta-\mu_iI}(a_n)
=(\Delta-\mu_iI)^{m_i-1}\prod_{j=1 \atop j\neq i}^{r}(\Delta-\mu_j I)^{m_j}(a_n).
\end{align}
Then
\begin{align*}
(\Delta-\mu_iI)q_{n,i}=(\Delta-\mu_iI)\frac{g(\Delta)}{\Delta-\mu_iI}(a_n)
=g(\Delta)(a_n)=\sum_{i=0}^{k}c_ia_{n+i}=0.
\end{align*}
This infers that
$$\Delta(q_{n,i})=\mu_iI(q_{n,i})=\mu_i q_{n,i}.$$
But $\Delta(q_{n,i})=q_{n+1,i}$.
Thus for any integer $n\ge 1$, one has $q_{n+1,i}=\mu_i q_{n,i}$.
In other words,
$$q_{n,i}=q_{1,i}\mu_i^{n-1}$$
for any integers $n$ and $i$ with $n\ge 1$ and $1\le i\le r$.

Since $\{a_n\}_{n=1}^\infty$ has finitely many limit points, by \eqref{eq2.18},
applying Lemma \ref{lemma2.9} for $k-1$ times yields that $\{q_{n, i}\}_{n=1}^{\infty}$
also has finitely many limit points. It then follows from Lemma \ref{lemma2.10} that
for any integer $i$ with $1\le i\le r$, we have either $|\mu_i|<1$ or there is an integer
$\ell_i\ge 1$ such that $\mu_i$ is an $\ell_i$-th primitive root of unity.
Moreover, the equation
$$g(\Delta)(a_n)=\prod_{i=1}^{r}(\Delta-\mu_i I)^{m_i}(a_n)=0$$
is a linear homogeneous difference equation with the constant coefficients $c_0, c_1, ..., c_k$.
Then by Lemma \ref{lemma2.11'}, we have
$$a_n=\sum_{i=1}^rd_i(n)\mu_i^n$$
where for all $1\le i\le r$, one has
$$d_i(x):=a_{i,0}+a_{i,1}x+a_{i,2}x^2+\ldots+a_{i,m_{i}-1}x^{m_i-1}\in\mathbb{C}[x].$$

Without loss of generality, one may assume that $\mu_i$ is an $\ell_i$-th primitive root
of unity for $1\le i\le t$ and $|\mu_i|<1$ for $t+1\le i\le r$, where $0\le t\le r$.
For any integer $n\ge 1$, let
$$\alpha_n:=\sum_{i=1}^td_i(n)\mu_i^n$$
and
$$\beta_n:=\sum_{i=t+1}^r d_i(n)\mu_i^n.$$
Then $a_n=\alpha_n+\beta_n$.

First of all, we show that $\beta_n\rightarrow 0$ as $n\rightarrow\infty$. Actually,
since $|\mu_i|<1$ for all $t+1\le i\le r$, we have
\begin{align*}
|\beta_n|=& |\sum_{i=t+1}^r d_i(n)\mu_i^n|\\
\le & \sum_{i=t+1}^r |d_i(n)| |\mu_i|^n\\
= & \sum_{i=t+1}^r |\sum_{l=0}^{m_i-1} a_{i,l} n^l| |\mu_i|^n\\
\le & \sum_{i=t+1}^r \sum_{l=0}^{m_i-1} |a_{i,l}|\cdot n^l\cdot |\mu_i|^n\\
= & \sum_{i=t+1}^r \sum_{l=0}^{m_i-1} |a_{i,l}|\cdot \frac{n^l}{|\frac{1}{\mu_i}|^n}\\
\rightarrow & \sum_{i=t+1}^r \sum_{l=0}^{m_i-1} |a_{i,l}|\cdot 0=0 \ {\rm as} \ n\rightarrow\infty,
\end{align*}
where in the last step, we used the basic fact that $n^l/|\frac{1}{\mu_i}|^n\rightarrow 0$
as $n\rightarrow\infty$. Hence
$$
\lim_{n\rightarrow\infty}|\beta_n|=0
$$
as desired.

Second, we have
\begin{align*}
\alpha_n=&\sum_{i=1}^{t}d_i(n)\mu_i^n
=\sum_{i=1}^{t}\Big(\sum_{j=0}^{m_i-1}a_{i,j}n^j\Big)\mu_i^n \nonumber\\
=&\sum_{i=1}^{t}a_{i,0}\mu_i^n+\sum_{i=1}^{t}
\Big(\sum_{j=1}^{m_i-1}a_{i,j}n^j\Big)\mu_i^n \nonumber\\
=&\sum_{i=1}^{t}a_{i,0}\mu_i^n+\sum_{j=1}^{M-1}
\Big(\sum_{i=1}^{t}a_{i,j}\mu_i^n\Big)n^j,
\end{align*}
where $M:=\max(m_1,\ldots,m_t)$ and $a_{i, j}:=0$ if $j>m_i-1$. For $0\le j\le M-1$, let
$$
c_j(n):=\sum_{i=1}^{t}a_{i,j}\mu_i^n.
$$
Then
\begin{align}\label{eq2.19}
\alpha_n=c_0(n)+\sum_{j=1}^{M-1}c_j(n)n^j.
\end{align}

Let $\ell:={\rm lcm}(\ell_1,\ldots,\ell_m)$. Since $\mu_i$ is an $\ell_i$-th primitive
root of unity, we have
\begin{align}\label{eq2.20}
c_j(n)=c_j(\langle n\rangle_\ell),
\end{align}
where $0\le \langle n\rangle_\ell\le \ell-1$ and $\langle n\rangle_\ell\equiv n\pmod \ell$.

{\sc Claim} that $c_j(n)=0$ for all $1\le j\le M-1$ and for any integer $1\le n\le \ell$.
Otherwise, there exist integers $T$ and $n_0$ with $1\le T \le M-1$ and $1\le n_0\le \ell$
such that $c_T(n_0)\neq 0$ and $c_{T+1}(n_0)=\ldots=c_{M-1}(n_0)=0$. For any integer $u\ge 1$, we have
\begin{align*}
\alpha_{n_0+\ell u}=&c_0({\langle n_0+\ell u\rangle_\ell})
+c_T(\langle n_0+\ell u\rangle_\ell)(n_0+\ell u)^T
+c_{T-1}(\langle n_0+\ell u\rangle_\ell)(n_0+\ell u)^{T-1} \nonumber\\
&+\ldots+c_1(\langle n_0+\ell u\rangle_\ell)(n_0+\ell u) \nonumber\\
=&c_0(n_0)+c_T(n_0)(n_0+\ell u)^T+c_{T-1}(n_0)(n_0+\ell u)^{T-1}
+\ldots+c_1(n_0)(n_0+\ell u) \nonumber\\
=&c_0(n_0)+(n_0+\ell u)^T\Big(c_T(n_0)+\frac{c_{T-1}(n_0)}{n_0+\ell u}
+\ldots+\frac{c_1(n_0)}{(n_0+\ell u)^{T-1}} \Big).
\end{align*}
It then follows that
\begin{align*}
|\alpha_{n_0+\ell u}|=&\Big|c_0(n_0)+(n_0+\ell u)^T
\Big(c_T(n_0)+\frac{c_{T-1}(n_0)}{n_0+\ell u}
+\ldots+\frac{c_1(n_0)}{(n_0+\ell u)^{T-1}} \Big)\Big| \nonumber\\
\ge & \Big|(n_0+\ell u)^T\Big(c_T(n_0)+\frac{c_{T-1}(n_0)}{n_0+\ell u}
+\ldots+\frac{c_1(n_0)}{(n_0+\ell u)^{T-1}} \Big)\Big|-|c_0(n_0)|
\nonumber\\
\ge &|n_0+\ell u|^T \Big(|c_T(n_0)|-\Big|\frac{c_{T-1}(n_0)}{n_0+\ell u}\Big|
-\ldots-\Big|\frac{c_1(n_0)}{(n_0+\ell u)^{T-1}}\Big|\Big)-|c_0(n_0)|
\end{align*}
Noticing that
$$\Big|\frac{c_j(n_0)}{(n_0+\ell u)^{T-j}}\Big|\rightarrow 0$$
as $u \rightarrow \infty$  for $1\le j\le T-1$ and $c_T(n_0)\neq 0$, we have
$$
|n_0+\ell u|^T \Big(|c_T(n_0)|-\Big|\frac{c_{T-1}(n_0)}{n_0+\ell u}\Big|
-\ldots-\Big|\frac{c_1(n_0)}{(n_0+\ell u)^{T-1}}\Big|\Big)-|c_0(n_0)|\rightarrow \infty
$$
as $u\rightarrow \infty$. Therefore one derives that $|\alpha_{n_0+\ell u}|\rightarrow\infty$
as $u\rightarrow \infty$. But
$$
|a_{n_0+\ell u}|=|\alpha_{n_0+\ell u}+\beta_{n_0+\ell u}|
\ge |\alpha_{n_0+\ell u}|-|\beta_{n_0+\ell u}|.
$$
It follows from the fact $\lim_{u\rightarrow\infty}|\beta_{n_0+\ell u}|=0$
that $|a_{n_0+\ell u}|\rightarrow \infty$ as $u\rightarrow \infty$. This contradicts
with the hypothesis that $\{a_n\}_{n=1}^\infty$ is bounded. Hence $c_j(n)=0$ for all
$1\le j\le M-1$ and for any integer $1\le n\le \ell$. The claim is proved.

Finally, for any positive integer $n$, by the claim and \eqref{eq2.19} as well as \eqref{eq2.20}, we have
$$
\alpha_{n+\ell}=c_0(n+\ell)=c_0(\langle n+\ell\rangle_\ell)=c_0(\langle n\rangle_\ell)=c_0(n)=\alpha_n.
$$
Hence $\{\alpha_n\}_{n=0}^\infty$ is a periodic sequence with $\ell$ as its period.

This completes the proof of Lemma \ref{lemma2.11}.
\end{proof}

\section{Proof of Theorem 1.1}

In this section, we present the proof of Theorem 1.1. We begin with the
following known lemma.

\begin{lemma} (\textsection 2 of Chapter 16 of \cite{[VA]}, Weierstrass M-test) \label{lem5}
Let the series $\sum_{n=0}^\infty a_n(x)$ and $\sum_{n=0}^\infty b_n(x)$
satisfy that $|a_n(x)|\le b_n(x)$ for every $x\in [0,1)$ and for all
sufficiently large $n\in\mathbb{N}$. If the series $\sum_{n=0}^\infty b_n(x)$
uniformly converges on $[0,1)$, then the series $\sum_{n=0}^\infty a_n(x)$
converges absolutely and uniformly on $[0,1)$.
\end{lemma}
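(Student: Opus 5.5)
The statement is the Weierstrass M-test, a standard fact from analysis, so the plan is the usual Cauchy-criterion argument. The hypotheses are that $|a_n(x)|\le b_n(x)$ for all $x\in[0,1)$ and all $n\ge N_0$, and that $\sum b_n(x)$ converges uniformly on $[0,1)$.

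First, I will use uniform convergence of $\sum b_n(x)$ to get a uniform Cauchy criterion. Given $\varepsilon>0$, there is $N\ge N_0$ such that
$$\sum_{n=p}^{q} b_n(x)<\varepsilon \quad\text{for all } q\ge p>N \text{ and all } x\in[0,1).$$
This holds because the tails $\sum_{n\ge p}b_n(x)$ tend to $0$ uniformly, and the $b_n$ are nonnegative, being upper bounds for absolute values.

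Next, I will compare termwise. For $q\ge p>N$ and every $x\in[0,1)$,
$$\Big|\sum_{n=p}^{q}a_n(x)\Big|\le\sum_{n=p}^{q}|a_n(x)|\le\sum_{n=p}^{q}b_n(x)<\varepsilon.$$
So the series $\sum|a_n(x)|$ is uniformly Cauchy on $[0,1)$, and so is $\sum a_n(x)$.

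Finally, I will conclude convergence. At each fixed $x$ the series converges by completeness of $\mathbb{R}$ (or $\mathbb{C}$), which gives absolute convergence. Letting $q\to\infty$ in the uniform Cauchy estimate bounds the tails $\big|\sum_{n\ge p}a_n(x)\big|\le\varepsilon$ uniformly in $x$, which is uniform convergence.

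The only point needing care is converting "uniform convergence" of $\sum b_n$ into the uniform Cauchy tail bound. This is routine, and I expect no real obstacle.
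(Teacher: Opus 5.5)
Your proof is correct. It is the standard Cauchy-criterion comparison, and it gives both the absolute and the uniform convergence claimed. The paper supplies no proof of its own and only cites the textbook, where the M-test is proved by exactly this argument.
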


\begin{lemma}\cite{[VA]} (\textsection 2 of Chapter 16 of \cite{[VA]}, Weierstrass M-test) \label{lem6}
If the power series $\sum_{n=0}^\infty c_nx^n$ converges at the point $0<x_0<1$,
then it converges uniformly on the interval $[0, x_0]$, and the sum of the
series is continuous on this interval.
\end{lemma}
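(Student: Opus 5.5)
The plan is to prove this by Abel's summation by parts. Note that the claimed uniform convergence is on the \emph{closed} interval $[0,x_0]$, endpoint included, so a direct domination argument in the spirit of Lemma \ref{lem5} is not enough. Convergence at $x_0$ only gives boundedness of $c_nx_0^n$. That yields uniform convergence on $[0,x]$ for each $x<x_0$, comparing with $\sum (x/x_0)^n$, but it does not reach the point $x_0$ itself. So I will instead transfer the Cauchy criterion at $x_0$ to the whole interval by Abel summation.

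Setup. Put $a_n:=c_nx_0^n$, so $\sum_{n=0}^\infty a_n$ converges. For $x\in[0,x_0]$ put $t:=x/x_0\in[0,1]$, so that $c_nx^n=a_nt^n$. Define the tails $R_k:=\sum_{n=k}^\infty a_n$. Then $R_k\to 0$, so for any $\varepsilon>0$ there is $N_0$ with $|R_k|<\varepsilon$ for all $k\ge N_0$. This $N_0$ does not depend on $x$, and that independence is the whole point.

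Summation by parts. For $N_0\le N\le M$, write $a_n=R_n-R_{n+1}$ and rearrange:
\begin{align*}
\sum_{n=N}^{M}a_nt^n=R_Nt^N-R_{M+1}t^M+\sum_{n=N+1}^{M}R_n\,(t^n-t^{n-1}).
\end{align*}
Since $0\le t\le 1$, the powers $t^n$ are non-increasing, so $|t^n-t^{n-1}|=t^{n-1}-t^n$. The last sum therefore telescopes in absolute value, giving
\begin{align*}
\Big|\sum_{n=N}^{M}c_nx^n\Big|\le \varepsilon\big(t^N+t^M+(t^N-t^M)\big)\le 3\varepsilon
\end{align*}
for every $x\in[0,x_0]$. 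This is the uniform Cauchy criterion, so the series converges uniformly on $[0,x_0]$. The partial sums are polynomials, hence continuous, and a uniform limit of continuous functions is continuous; so the sum is continuous on $[0,x_0]$.

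The only delicate point is getting uniformity up to the endpoint $x_0$, where no absolute or dominated convergence is available. The monotonicity of $t^n$ in the summation-by-parts estimate is what handles this, and it is the step I would take care to check. Everything else is routine.
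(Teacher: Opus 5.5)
Your proof is correct and complete. The summation-by-parts identity and the telescoping bound yield the uniform Cauchy criterion on all of $[0,x_0]$, endpoint included; the bound is actually $2\varepsilon t^N\le 2\varepsilon$.

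The paper gives no argument of its own here. It only cites a textbook and labels the statement ``Weierstrass M-test''. As you rightly observe, domination alone does not prove the lemma as stated. The M-test with majorant $|c_n|x_0^n$ works only when $\sum_n|c_n|x_0^n<\infty$. Mere convergence at $x_0$ only gives uniform convergence on $[0,x]$ for $x<x_0$. The statement is really Abel's (second) theorem on power series, and your summation by parts is its standard proof: the Abel--Dirichlet test applied with the monotone, uniformly bounded factors $(x/x_0)^n$.

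In the paper's only application, Lemma~\ref{lem7}, one has $|t_m(n)|\le\binom{m+n-1}{n}$. Hence $\sum_n|t_m(n)|x_0^n$ converges for every $x_0\in(0,1)$, and the plain M-test would already suffice there. What your argument adds is the general statement, where convergence at $x_0$ may be only conditional.
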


\begin{lemma}\label{lem7}
The power series
$$\prod_{i=0}^\infty(1-x^{2^i})^m=\sum_{n=0}^{\infty}t_m(n)x^n$$
is continuous on the interval $[0,1)$.
\end{lemma}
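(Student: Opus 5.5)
The plan is to show that the power series $\sum_{n\ge0}t_m(n)x^n$ converges for every $x\in[0,1)$ and that its sum is continuous there. The natural route is through Lemma \ref{lem6}: it suffices to show convergence at each point $x_0\in(0,1)$. Given any $x_1\in[0,1)$, pick $x_0$ with $x_1<x_0<1$. Lemma \ref{lem6} then gives uniform convergence and continuity on $[0,x_0]$, which contains a neighbourhood of $x_1$ relative to $[0,1)$. So continuity holds at every point of $[0,1)$.

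For convergence at $x_0\in(0,1)$, I would use the crude polynomial bound recalled in the introduction: $|t_m(n)|\le$ the number of $(i_1,\dots,i_m)\in\mathbb{N}^m$ with $i_1+\dots+i_m=n$. This count is $\binom{n+m-1}{m-1}\le (n+1)^{m-1}$, since $|t_1(n)|=1$ and $t_m$ is the $m$-fold Cauchy convolution. Hence
\[
\sum_{n\ge0}|t_m(n)|x_0^n\le\sum_{n\ge0}(n+1)^{m-1}x_0^n<\infty ,
\]
by the ratio test. Alternatively, one can take $b_n(x)=(n+1)^{m-1}x_0^n$ on $[0,x_0]$ and apply Lemma \ref{lem5} directly. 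Either way the series converges absolutely, and uniformly on $[0,x_0]$.

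The one point needing care is the identification of the sum of the series with the infinite product $\prod_{i\ge0}(1-x^{2^i})^m$ on $[0,1)$. The statement is written as an equality of functions, so I would add a short argument for it. For fixed $x\in[0,1)$, the product converges, because $\sum_i x^{2^i}<\infty$. Its partial products $P_N(x)=\prod_{i=0}^{N}(1-x^{2^i})^m$ are polynomials whose coefficients agree with $t_m(n)$ for $n<2^{N+1}$.

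To compare, write $P_N(x)=F_m(x)/\prod_{i>N}(1-x^{2^i})^m$ as formal power series. The tail factor is $1+O(x^{2^{N+1}})$, with coefficients again polynomially bounded. From this, $P_N(x)-\sum_n t_m(n)x^n\to0$ for each fixed $x\in[0,1)$. This justifies the equality, and the main obstacle is only this bookkeeping; the continuity itself is immediate from Lemma \ref{lem6}.
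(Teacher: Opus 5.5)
Your proposal is correct and follows essentially the same route as the paper: you bound $|t_m(n)|$ by the number $\binom{n+m-1}{m-1}$ of ordered $m$-tuples summing to $n$, get convergence at each $x_0\in(0,1)$, and apply Lemma~\ref{lem6} on $[0,x_0]$. Your version is more careful than the paper's in two places. The paper asserts uniform convergence on all of $[0,1)$ via Lemma~\ref{lem5}, although the majorant $\sum_n\binom{m+n-1}{n}x^n=(1-x)^{-m}$ is not uniformly convergent there, whereas you only work on $[0,x_0]$. You also identify the series' sum with the infinite product, which the paper takes for granted; this works because the tail factor is $F_m(x^{2^{N+1}})$, whose value tends to $F_m(0)=1$.
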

\begin{proof}
By \cite{[MG]}, we have
\begin{align*}
\prod_{i=0}^\infty(1-x^{2^i})^m=&\Big(\sum_{n=0}^\infty(-1)^{s_2(n)}x^n\Big)^m\\
=& \Big(\sum_{n_1=0}^\infty(-1)^{s_2(n_1)}x^{n_1}\Big)...\Big(\sum_{n_m=0}^\infty(-1)^{s_2(n_m)}x^{n_m}\Big)\\
=&\sum_{n=0}^\infty\Big(\sum_{n_1+...+n_m=n}\prod_{i=1}^m(-1)^{s_2(n_i)}\Big) x^n\\
=&\sum_{n=0}^\infty\Big(\sum_{n_1+...+n_m=n}(-1)^{s_2(n_1)+...+s_2(n_m)}\Big) x^n,
\end{align*}
where $s_2(n)$ denotes the sum of (binary) digits function of $n$. It follows that
\begin{align*}
|t_m(n)|=& \Big|\sum_{n_1+...+n_m=n}(-1)^{s_2(n_1)+...+s_2(n_m)}\Big|\\
\le& \sum_{n_1+...+n_m=n}1\\
=& \binom{m+n-1}{n}.
\end{align*}
However,
$$\sum_{n=0}^\infty \binom{m+n-1}{n}x^n=\frac{1}{(1-x)^m},\quad 0\le |x|<1.$$
Hence by Lemma \ref{lem5}, one derives that the infinite product $\prod_{i=0}^\infty(1-x^{2^i})^m$
converges absolutely and uniformly on $[0,1)$. Then for every $x_0\in[0,1)$,
$\prod_{i=0}^\infty(1-x_0^{2^i})^m$ converges. Thus by Lemma \ref{lem6}, one knows that
the infinite product function $\prod_{i=0}^\infty(1-x^{2^i})^m$ is continuous on $[0,1)$.

This concludes the proof of Lemma 3.3.
\end{proof}

\begin{lemma} (\textsection Linear mapping of Chapter 1 of \cite{[Ru]}) \label{lem4}
Suppose $\Lambda$ is a linear functional on a topological vector space $R^{m-1}$, and there exists
$x \in R^{m-1}$ such that $\Lambda x \neq 0$. Then the following four properties are equivalent:

{\rm (i).} $\Lambda$ is continuous at $0$.

{\rm (ii).} The null space $N(\Lambda)$ is closed.

{\rm (iii).} $N(\Lambda)$ is not dense in $R^{m-1}$.

{\rm (iv).} $\Lambda$ is bounded in some neighborhood of $0$.
\end{lemma}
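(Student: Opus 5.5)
The plan is to prove the cycle of implications (i)$\Rightarrow$(ii)$\Rightarrow$(iii)$\Rightarrow$(iv)$\Rightarrow$(i), following the standard argument for linear functionals on a topological vector space. Since the space here is $R^{m-1}$, I will use Euclidean balls as the basic neighborhoods of $0$; these are balanced and convex, which removes the only delicate point. As a sanity check I will also note that in $R^{m-1}$ every linear functional has the form $\Lambda x=\sum_{i}a_ix_i$, so it is continuous by the Cauchy--Schwarz inequality. Hence all four properties actually hold. Still, I will argue the equivalences directly, since that is how the lemma is stated.

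\textbf{(i)$\Rightarrow$(ii) and (ii)$\Rightarrow$(iii).} By linearity, continuity at $0$ gives continuity everywhere, because $\Lambda y-\Lambda y_0=\Lambda(y-y_0)$. Then $N(\Lambda)=\Lambda^{-1}(\{0\})$ is the preimage of a closed set, hence closed. Next, if $N(\Lambda)$ is closed, its closure is $N(\Lambda)$ itself. This is not all of $R^{m-1}$, because by hypothesis some $x$ has $\Lambda x\neq 0$. Therefore $N(\Lambda)$ is not dense.

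\textbf{(iii)$\Rightarrow$(iv).} I expect this to be the main step. Since $N(\Lambda)$ is not dense, its complement contains a nonempty open set. So there are a point $x$ and a ball $V$ centered at $0$ with $(x+V)\cap N(\Lambda)=\emptyset$. Because $V$ is balanced, $\Lambda(V)$ is a balanced subset of the scalar field. A balanced subset of $\mathbb{R}$ (or $\mathbb{C}$) is either bounded or equal to the whole field. Suppose it were the whole field. Then there would be $y\in V$ with $\Lambda y=-\Lambda x$, so that $x+y\in N(\Lambda)\cap(x+V)$, a contradiction. Hence $\Lambda$ is bounded on $V$, which is (iv).

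\textbf{(iv)$\Rightarrow$(i).} Suppose $|\Lambda|<M$ on a neighborhood $V$ of $0$, where we may take $V$ to be a ball. Given $\varepsilon>0$, the set $W=(\varepsilon/M)V$ is again a neighborhood of $0$, and $|\Lambda w|<\varepsilon$ for all $w\in W$. This is exactly continuity at $0$, which closes the cycle.
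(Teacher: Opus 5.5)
Your proposal is correct, and it is essentially the standard argument from Rudin's \emph{Functional Analysis} (Theorem 1.18, the chain (i)$\Rightarrow$(ii)$\Rightarrow$(iii)$\Rightarrow$(iv)$\Rightarrow$(i) via balanced neighborhoods). The paper gives no proof of its own and only cites that source; your version specializes Rudin's argument to Euclidean balls in $R^{m-1}$. Your side remark is also right: every linear functional on $R^{m-1}$ is continuous, so all four properties hold automatically there.
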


Finally, we are in the position to prove Theorem 1.1.\\
\\
{\it Proof of Theorem 1.1.}
We prove Theorem 1.1 by using contradiction. Suppose that the sequence
$\{t_m(n)\}_{n=0}^\infty$ is bounded. Let $\overrightarrow{b_n}$ be
defined as in (\ref{eq1}), i.e.,
\begin{equation*}
\overrightarrow{b_n}
=P\left(\begin{array}{c}
t_m(n-1)\\
t_m(n-2)\\
.\\
.\\
.\\
t_m(n-m+1)\\
\end{array}\right).
\end{equation*}
By Lemma \ref{lem4}, for $\overrightarrow{x}\in R^{m-1}$,
$f(\overrightarrow{x})=P\overrightarrow{x}$ is a continuous function.
Moreover, we have
$\{t_m(n)\}_{n=0}^\infty$ is bounded $\iff$ $\{\overrightarrow{b_n}\}_{n=0}^\infty$
is bounded $\iff$ each component of $\{\overrightarrow{b_n}\}_{n=0}^\infty$ is
bounded.

By (\ref{eq2}), and the paragraph exactly after Lemma 2.5, one has
$$\overrightarrow{b_{2^dn}}
={\rm diag}(J_{\lambda_1}^d, A_{1,m}^d)\overrightarrow{b_n},$$
where $J_{\lambda_1}$ is a Jordan block of order $r$, corresponding
to an eigenvalue $\lambda_1$ with $|\lambda_1|\textgreater1$.
Looking at the $r$-th component of the vector $\overrightarrow{b_n}$,
we have
$$(\overrightarrow{b_{2^dn}})_r=\lambda_1^d(\overrightarrow{b_n})_r.$$
If $(\overrightarrow{b_n})_r\ne0$, then
$$\lim_{d\rightarrow \infty}|(\overrightarrow{b_{2^dn}})_r|=\infty,$$
which contradicts with the boundedness of $\overrightarrow{b_{2^dn}}$.
Hence we must have
$(\overrightarrow{b_n})_r=0$.

By (\ref{eq1}), there exist complex numbers $c_1,\dots,c_{m-1}$ such that
\begin{align}\label{eq3.1}
\sum_{i=1}^{m-1}c_{i}t_m(n-i)=(\overrightarrow{b_n})_r=0
\end{align}
for any $n\ge m$. In fact, $(c_1,c_2,...,c_{m-1})$ is the $r$-th row vector of $P$.

Since $P$ is invertible, the coefficient vector $(c_1,\dots,c_{m-1})$ is nonzero.
If exactly one of the coefficients is nonzero, say $c_j$, then \eqref{eq3.1}
reduces to $c_j t_m(n-j)=0$ for all $n\ge m$, which implies that $t_m(k)=0$
for all sufficiently large $k$. However,
$$F_m(x)=\prod_{i=0}^\infty (1-x^{2^i})^m.$$
So $F_m(x)$ has infinitely many distinct zeroes on the unit circle. Since $F_m(0)=1\neq0$, $F_m(x)$
cannot be a zero polynomial. This shows that the case of exactly one nonzero coefficient cannot occur.
Therefore there exist at least two distinct indices $1\le i,j\le m-1$ with $c_i\ne 0$ and $c_j\neq0$.
Let $i_0$ and $j_0$ be such minimal index and maximal index, respectively. Then $1\le i_0<j_0\le m-1$,
$c_{i_0}\ne 0, c_{j_0}\ne 0$ and $c_i=0$ for all $i\in [1,i_0-1]\cup[j_0+1,m-1]$. Then (3.1)
becomes the following relation.
\begin{align}\label{eq3.2}
\sum_{i=i_0}^{j_0}c_{i}t_m(n-i)=0 \ \forall \ n\ge j_0.
\end{align}

By using Lemma \ref{lemma2.11}, we have $t_m(n)=\alpha_n+\beta_n$, where $\{\alpha_n\}_{n=1}^\infty$
is periodic and $\{\beta_n\}_{n=0}^\infty$ satisfies $\lim_{n\rightarrow \infty}|\beta_n|=0$. Then
the set of limit point of $\{\alpha_n\}_{n=0}^\infty$ is equal to the set of limit point
of $\{t_m(n)\}_{n=0}^\infty$, and we denote this set by $A$. Since $\{t_m(n)\}_{n=0}^\infty\in\mathbb{Z}$,
for $n$ large enough, we have $A\subseteq\mathbb{Z}$. Since $\{\alpha_n\}_{n=0}^\infty$ is periodic,
each of its values occurs for infinitely many times and therefore itself is a limit point of the sequence.
Hence $\{\alpha_n\}_{n=0}^\infty \subseteq A \subseteq \mathbb{Z}$. Thus $\beta_n=t_m(n)-\alpha_n$ is
the difference of two integers and so is an integer. Notice that $\beta_n\to 0$,
the only way an integer sequence can converge to zero is to be eventually identically zero.
Thus there exists an integer $N\ge 0$ such that $\beta_n=0$ for all integers $n\ge N$. Thus
\begin{align}\label{eq3.3}
t_m(n)=\alpha_n \ \forall \ n\ge N.
\end{align}

If $N=0$, then $t_m(n)=\alpha_n$ for all integers $n\ge 0$ and so $\{t_m(n)\}_{n=0}^\infty$
is a periodic sequence. In what follows, we assume that $N\ge 1$. In this case, we show that
$\{t_m(n)\}_{n=0}^\infty$ is still a periodic sequence. This will be done in the following.

For any integer $n\ge N+j_0$, by \eqref{eq3.3} we have $t_m(n-i)=\alpha_{n-i}$ for any
$i_0\le i\le j_0$. So by \eqref{eq3.2}, we have
\begin{align}\label{eq3.4}
\sum_{i=i_0}^{j_0}c_i\alpha_{n-i}=0 \ \forall \ n\ge N+j_0.
\end{align}
Let $q$ be the smallest positive period of $\alpha_n$. For any $l\in [1,N]$, there exists
an integer $u_\ell$ such that $N+j_0-\ell+u_\ell q\ge N+j_0$. So picking $n=N+j_0-\ell+u_\ell q$
in \eqref{eq3.4} yields that
\begin{align*}
\sum_{i=i_0}^{j_0} c_i\alpha_{N+j_0-\ell+u_\ell q-i}=0,
\end{align*}
which implies that
\begin{align}\label{eq3.5}
\sum_{i=i_0}^{j_0} c_i\alpha_{N+j_0-\ell-i}=0.
\end{align}

We {\sc claim} that for any integer $\ell'\in[1,N]$, if $t_m(n)=\alpha_{n}$
for all $n\ge\ell'$, then $t_m(\ell'-1)=\alpha_{\ell'-1}$. Letting
$\ell:=N-\ell'+1$ in \eqref{eq3.5} gives that
\begin{align*}
\sum_{i=i_0}^{j_0} c_i\alpha_{\ell'-1+j_0-i}=0,
\end{align*}
which implies that
\begin{align}\label{eq3.6}
\alpha_{\ell'-1}=-\frac{1}{c_{j_0}}\sum_{i=i_0}^{j_0-1} c_i\alpha_{\ell'-1+j_0-i}.
\end{align}
Since $\ell'-1+j_0-i\ge \ell'$ for $i_0\le i\le j_0-1$, we have
$$t_m(\ell'-1+j_0-i)=\alpha_{\ell'-1+j_0-i}.$$
Then by \eqref{eq3.6}, we deduce that
\begin{align}\label{eq3.7}
\alpha_{\ell'-1}=-\frac{1}{c_{j_0}}\sum_{i=i_0}^{j_0-1} c_it_m(\ell'-1+j_0-i).
\end{align}
Since $\ell'-1+j_0\ge j_0$, picking $n:=\ell'-1+j_0$ in \eqref{eq3.2} tells us that
$$\sum_{i=i_0}^{j_0}c_it_m(\ell'-1+j_0-i)=0.$$
This yields that
\begin{align}\label{eq3.8}
t_m(\ell'-1)=-\frac{1}{c_{j_0}}\sum_{i=i_0}^{j_0-1} c_it_m(\ell'-1+j_0-i).
\end{align}
Then comparing \eqref{eq3.7} and \eqref{eq3.8}, one arrives that
$t_m(\ell'-1)=\alpha_{\ell'-1}$. The claim is proved.

In conclusion, by the claim, we can deduce that $t_m(n)=\alpha_n$ for all
$0\le n\le N-1$. Moreover, this together with (3.3) implies that  $t_m(n)=\alpha_n$
for all $n\ge 0$. Therefore $t_m(n)$ is a periodic sequence with $q$ as its smallest
period.

Since $q$ is the smallest period of the sequence $\{t_m(n)\}_{n=0}^\infty$, one has
\begin{align*}
& (1-x^q)\prod_{i=0}^\infty(1-x^{2^i})^m\\
=& (1-x^q)\sum_{n=0}^{\infty}t_m(n)x^n\\
=&\sum_{n=0}^{q-1}t_m(n)x^n+\sum_{n=q}^{\infty}(t_m(n)-t_m(n-q))x^n\\
=&\sum_{n=0}^{q-1}t_m(n)x^n\\
:=& h(x),
\end{align*}
where $h(x)\in\mathbb{Z}[x]$
is a polynomial of degree $\deg(h(x))\le q-1$.

By Lemma \ref{lem7}, we know that
$$F_m(x)=\prod_{i=0}^\infty(1-x^{2^i})^m$$
is continuous on $[0,1)$. Let $a\ge 0$ be an integer such that
$$(1-x)^a|h(x)$$
and
$$(1-x)^{a+1}\nmid h(x).$$
One may write
$$h(x)=(1-x)^ah_1(x),$$
where $h_1(x)\in\mathbb{Z}[x]$ with
\begin{align}\label{eq3.9}
h_1(1)\not=0.
\end{align}

On the one hand, for any $x\in[0,1)$, one has
\begin{align*}
0<& \frac{(1-x^q)F_m(x)}{(1-x)^a}\\
=& (1-x^q)\prod_{i=a}^\infty(1-x^{2^i})^m\cdot \prod_{i=0}^{a-1}\frac{(1-x^{2^i})^m}{1-x}\\
\le & (1-x^q)\prod_{i=0}^{a-1}\frac{1-x^{2^i}}{1-x}.
\end{align*}
Taking $x\rightarrow 1^-$, we obtain that
$$\lim_{x\rightarrow 1^-}(1-x^q)\prod_{i=0}^{a-1}\frac{1-x^{2^i}}{1-x}
=\lim_{x\rightarrow 1^-}(1-x^q)\prod_{i=0}^{a-1}\prod_{j=0}^{i-1}(1+x^{2^j})
=0,$$
and so
$$\lim_{x\rightarrow 1^-}\frac{(1-x^q)F_m(x)}{(1-x)^a}=0.$$

On the other hand, we have
$$
\lim_{x\rightarrow 1^-}\frac{(1-x^q)F_m(x)}{(1-x)^a}
=\lim_{x\rightarrow 1^-} h_1(x)=h_1(1).
$$
Hence we can conclude that $h_1(1)=0$, which contradicts with (\ref{eq3.9}).
Therefore the sequence $\{t_m(n)\}_{n=0}^\infty$ must be unbounded.

This finishes the proof of Theorem 1.1. \hfill$\Box$\\

	\end{document}